\tikzset{cross/.style={cross out, draw=black, minimum size=2*(#1-\pgflinewidth), inner sep=0pt, outer sep=0pt},
cross/.default={1pt}}
\tikzset{->-/.style={decoration={
  markings,
  mark=at position #1 with {\arrow[scale=1.5]{>}}},postaction={decorate}}}
\tikzset{-<-/.style={decoration={
  markings,
  mark=at position #1 with {\arrow[scale=1.5]{<}}},postaction={decorate}}}
\tikzset{midstealth/.style={decoration={
  markings,
  mark=at position #1 with {\arrow{stealth}}},postaction={decorate}}}
\newtheoremstyle{break}
  {}%
  {}%
  {\itshape}
  {}%
  {\bfseries}
  {.}%
  {\newline}%
  {}%
\theoremstyle{plain}
\newtheorem{thm}{Theorem}[section]
\newtheorem{rem}[thm]{Remark}
\newtheorem{exa}[thm]{Example}
\newtheorem{prop}[thm]{Proposition}
\newtheorem{defn}[thm]{Definition}
\newcommand{\Z}{\mathbb{Z}}
\newcommand{\R}{\mathbb{R}}
\newcommand{\C}{\mathbb{C}}
\newcommand{\PP}{\mathbb{P}}
\newcommand{\ii}{\sqrt{-1}}
\DeclareMathOperator{\Homo}{H}
\DeclareMathOperator{\codim}{codim}
\title{A tree expansion formula of a homology intersection numbers on the configuration space $\mathcal{M}_{0,n}$}
\author{Saiei-Jaeyeong Matsubara-Heo\footnote{Graduate School of Science, Kobe  University, 1-1 Rokkodai, Nada-ku, Kobe 657-8501, Japan.\newline e-mail: \texttt{saiei@math.kobe-u.ac.jp}}}
\begin{document}

\date{}
\maketitle

\begin{abstract}      
In \cite{M}, Sebastian Mizera discovered a tree expansion formula of a homology intersection number on the configuration space $\mathcal{M}_{0,n}$. The formula originates in a study of Kawai-Lewellen-Tye relation in string theory. In this paper, we give an elementary proof of the formula. The basic ingredients are the combinatorics of the real moduli space $\overline{\mathcal{M}}_{0,n}(\R)$ and a combinatorial identity related to the face number of the associahedron. 
\end{abstract}

\section{Introduction}
One of the distinguished properties of hypergeometric functions is that they enjoy integral representations whose integrands are elementary functions. These integrals are called {\it hypergeometric integrals}. The standard machinery of studying the hypergeometric integral is to regard it as a period integral, a pairing between a de Rham cohomology group and a homology group with local system coefficient. The latter object is also called {\it twisted homology group} in the context of special functions. Combining this viewpoint with Poincar\'e duality, the intersection theory of twisted homology groups naturally comes into play (\cite{G}, \cite{GMH}, \cite{MW}, \cite{MH}, \cite{MaY}, \cite{Mim}, \cite{MY}, \cite{M}, \cite{OMY}, \cite{OST}).

Though a period integral can be defined for a more general class of integrals, a remarkable feature of the intersection theory of hypergeometric integral is that intersection numbers of specific homology classes have a combinatorial expression. In this paper, we are interested in the intersection number associated to the following integral:
\begin{equation}\label{eqn:0}
\int_{\Delta}\prod_{1\leq i<j\leq n}(t_i-t_j)^{s_{ij}}\omega.
\end{equation}
Here, the ambient space of integration is the configuration space $\mathcal{M}_{0,n}:={\rm Conf}_{n}(\PP^1)/{\rm GL}(2;\C)$ of $n$-points on the Riemann sphere $\PP^1$ with ${\rm Conf}_{n}(\PP^1):=\{ (t_1,\dots,t_n)\in(\PP^1)^n\mid t_i\neq t_j\text{ for any }1\leq i<j\leq n\}$, $s_{ij}$ are complex parameters subject to a linear constraint, $\Delta$ is an integration cycle and $\omega$ is a meromorphic differential form on $\mathcal{M}_{0,n}$. Note that the action of ${\rm GL}(2;\C)$ on ${\rm Conf}_{n}(\PP^1)$ is given by m\"obius transform of each coordinate $t_i$. If we normalize $t_1$, $t_{n-1}$ and $t_n$ to $0,1$ and $\infty$, $\mathcal{M}_{0,n}$ is the complement of the Selberg arrangement. 

The real part $\mathcal{M}_{0,n}(\R)$ of the configuration space $\mathcal{M}_{0,n}$ consists of finitely many connected components $\{ \Delta(\alpha)\}_\alpha$ labeled by the quotient $\mathfrak{S}_n/D_n$ of the permutation group $\mathfrak{S}_n$. Here, $D_n$ is a subgroup of $\mathfrak{S}_n$ isomorphic to the dihedral group of order $2n$. We can regard $\Delta(\alpha)$ as an element of the twisted (Borel-Moore) homology group\footnote{In this paper, we assume that $s_{ij}$ are generic parameters in a sense that will be clarified later (see \S\ref{sec:4}). Under this condition, the twisted homology group is canonically isomorphic to its Borel-Moore counterpart.} by specifying the determination of the multivalued function $\prod_{1\leq i<j\leq n}(t_i-t_j)^{s_{ij}}$ and we write $[C^+(\alpha)]$ for the homology class determined by $\Delta(\alpha)$ in this sense. The set $\{ [C^+(\alpha)]\}_\alpha$ generates the twisted homology group. If we write $[C^-(\alpha)]$ for the homology class obtained by replacing $s_{ij}$ by $-s_{ij}$ in the definition of $[C^+(\alpha)]$, it is natural to expect that the homology intersection number $\langle [C^+(\alpha)],[C^-(\beta)]\rangle_h$ has a combinatorial formula. The study of the homology intersection number $\langle [C^+(\alpha)],[C^-(\beta)]\rangle_h$ is not new. For example, the authors of \cite{OMY} describes the recursive structure of the intersection number with respect to the natural fibration $\mathcal{M}_{0,n}\rightarrow\mathcal{M}_{0,n-1}$. The usual Selberg integral (\cite{Sel}) appears if we specialize the parameters $s_{ij}$ of (\ref{eqn:0}) in a specific manner. In this case, a symmetric group acts on the twisted homology group and the invariant part of the (dual) twisted homology group is a one-dimensional vector space spanned by the class $[C^\pm]:=\sum_{\alpha\in\mathfrak{S}_{n-3}}[C^\pm(\alpha)]$. Here, the subset $\mathfrak{S}_{n-3}\subset \mathfrak{S}_{n}/D_n$ of permutations of $\{ 1,\dots,n\}$ whose element fixes $1,n-1$ and $n$ corresponds to {\it the bounded chambers} of the Selberg arrangement. The authors of \cite{MY} evaluated the self intersection number $\langle [C^+],[C^-]\rangle_h$ in terms of sine functions of the parameters.

In \cite{M}, Sebastian Mizera discovered yet another formula of the intersection number $\langle [C^+(\alpha)],[C^-(\beta)]\rangle_h$: a tree expansion formula. Tree diagrams naturally appear from the fact that the closure $K(\alpha)$ of each $\Delta(\alpha)$ in the real part $\overline{\mathcal{M}}_{0,n}(\R)$ of the Deligne-Knudsen-Mumford compactification $\overline{\mathcal{M}}_{0,n}$\footnote{In \cite{M}, the Deligne-Knudsen-Mumford compactification $\overline{\mathcal{M}}_{0,n}$ is denoted by $\widetilde{\mathcal{M}}_{0,n}$} (\cite{Knud}) is the associahedron (\cite{S}), whose faces are in one-to-one correspondence to a set of trees. What is remarkable in his formula is the fact that only a few tree diagrams actually contribute to the homology intersection number $\langle C^+(\alpha),C^-(\beta)\rangle_h$. Namely, we only need to focus on the trees of which the valency at any internal vertex is  odd. We call such a tree diagram an {\it admissible tree}. Following \cite{M}, we set $\langle C^+(\alpha),C^-(\beta)\rangle_h=\left(\frac{\ii}{2}\right)^{n-3}m(\alpha|\beta)$\footnote{More precisely, $m(\alpha|\beta)$ should be denoted by $m_1(\alpha|\beta)$ (\cite{M}, \cite{MI}).}. Let us explain how the formula looks like when both $\alpha$ and $\beta$ are taken to be the identity permutation $\mathbb{I}_n:=12\cdots n$. If $T$ is an admissible tree, we assign a Catalan number $C_{\frac{|v|-3}{2}}$ to each internal vertex $v$ and assign a linear combination $s_e$ of the parameters $s_{ij}$ to each internal edge $e$. Then, the formula of \cite[Lemma 4.1]{M} computes the number $m(\mathbb{I}_n|\mathbb{I}_n)$ as a sum of terms of the form 
\begin{equation}\label{eqn:Intro}
\prod_{v:internal\ vertex\ of\ T}C_{\frac{|v|-3}{2}}\prod_{e:internal\ edge\ of\ T}\cot(\pi s_e)
\end{equation}
for all admissible trees $T$. A precise formulation is given in \S\ref{sec:4} of this paper.

Interestingly, the tree expansion formula was discovered in physical context. The Kawai-Lewellen-Tye relation in string theory is a manifestation of open/closed string duality and it expresses the closed string amplitude in terms of a quadratic combination of open string amplitudes. We can arrange the coefficients of the quadratic combination into a square matrix which we call the KLT kernel. It turns out that, at tree-level, KLT relation can be reformulated as the {\it twisted period relation} (\cite[THEOREM 2]{CM}, \cite[(5.1)]{HY}, \cite[Theorem 6.2]{MH2}), a relation among period integrals, twisted homology and cohomology intersection numbers (\cite[(3.19)]{M}). Thus, the inverse of the KLT kernel can be seen as the twisted homology intersection matrix. The tree expansion formula of $m(\alpha|\beta)$ in \cite[Lemma 4.1, Theorem 4.1]{M} is then an {\it $\alpha^\prime$-correction} of the formula of the field theory limit of the inverse of the KLT kernel  obtained in \cite{CHY}. More precisely, if we replace the parameters $s_{ij}$ by rescaled ones $\alpha^\prime s_{ij}$, the limit $\alpha^\prime\rightarrow 0$ of (\ref{eqn:Intro}) multiplied by $(\pi\alpha^\prime)^{n-3}$ computes the field theory inverse KLT kernel (\cite[(3.2)]{MI}). At the limit, we only have the contribution from all trivalent tree diagrams which amounts to focusing on the vertices of the associahedron.




Unfortunately (for mathematicians), the argument in \cite{M} makes use of physical intuition developed in \cite{MI} which deals with the $\alpha^\prime$-correction of the bi-adjoint scalar amplitude discussed in \cite{CHY}. The aim of this paper is to provide a math-friendly proof of the tree expansion formula of \cite[Lemma 4.1, Theorem 4.1]{M}. Once we recall the well-known cell decomposition of $\overline{\mathcal{M}}_{0,n}(\R)$ (\cite{Devadoss}), it is easy to see that the homology intersection number $\langle C^+(\alpha),C^-(\beta)\rangle_h$ has a tree expansion formula. Indeed, it has already appeared in \cite[Lemma 1]{MY} in a slightly different form. However, the sum is taken over all tree diagrams and we have sine-like functions in the summand. Therefore, it is important to see why many terms cancel each other in the cotangent expansion. The answer is a simple, but non-trivial combinatorial identity (Theorem \ref{prop:WZ}). Combining Theorem \ref{prop:WZ} with the combinatorics of the real moduli space $\overline{\mathcal{M}}_{0,n}(\R)$, we obtain the desired formula. Since the proof turned out to be short and concise, we expect an analogous formula of homology intersection numbers of Coxeter arrangements (\cite{ACDELM}). This aspect will be discussed in a forthcoming paper.



The author thanks Sebastian Mizera for letting me know his formula \cite[Theorem 4.1]{M}, asking me if there is a short mathematical proof of the result and many other valuable comments. The author also thanks Genki Shibukawa for reporting to me a simpler proof of Theorem \ref{prop:WZ} than the one we originally obtained. With his permission, we include his proof as the second proof of Theorem \ref{prop:WZ}. This work is supported by JSPS KAKENHI Grant Number 19K14554 and JST AIP-PRISM Grant.

\section{Convention for faces of associahedron}

\indent
In this section, we introduce some basic notation related to the associahedron. Let $n\geq 3$ be an integer and let us consider a sequence of letters $12\cdots n-1$. A bracket $a$ is a consecutive digits $a=i\cdots j$ ($1\leq i< j\leq n-1$) which is not $12\cdots n-1$. We write $|a|$ for the length $j-i+1$ of $a$. A bracketing $F$ of letters $12\cdots n-1$ is a collection of brackets such that for any pair of elements $a,a^\prime\in F$ either $a\cap a^\prime=\varnothing$, $a\subset a^\prime$, or $a^\prime \subset a$ is true.
\begin{defn}[associahedron]
The associahedron (or Stasheff polytope) $K_{n-1}$ is a convex polytope of dimension $n-3$ whose face poset is isomorphic to that of bracketings of $n-1$ letters $12\cdots n-1$, ordered so that $F\prec F^\prime$ if $F$ is obtained from $F^\prime$ by adding new brackets. 
\end{defn}

If $F$ is a face of associahedron $K_{n-1}$, we write $F<K_{n-1}$. A face $F<K_{n-1}$ corresponds to a polyhedral subdivision of a planer convex $n$-gon whose edges are labeled by $1,2,\dots, n$ in a clockwise order on  a unit circle which we regard as the boundary of a unit disk.  We define $\codim F$ as the number of brackets appearing in $F$. We set $\dim F:=n-3-\codim F$. Another interpretation of a face $F<K_{n-1}$ is a rooted tree embedded in a unit disk of which the external vertices are labeled by $1,2,\dots, n$ in a clockwise manner. The label $n$ corresponds to the root vertex. The rule is as follows:
\begin{enumerate}
\item Write a planer $n$-gon whose edges are labeled by $1,\dots,n$ in a clockwise way.
\item To each polygon $\Delta_i$ in $F$, we associate a vertex $v_i$ located at the barycenter of $\Delta_i$. Draw external edges from $v_i$ to edges of $\Delta_i$. Finally, if $\Delta_i$ and $\Delta_j$ share an edge, we connect $v_i$ and $v_j$.
\end{enumerate}

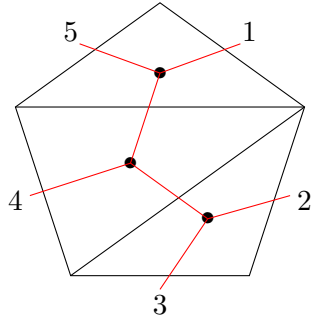
\begin{figure}[H]
\begin{center}
\begin{tikzpicture}
\coordinate (V3) at (90:2);
\coordinate (V4) at (18:2);
\coordinate (V1) at (-54:2);
\coordinate (V2) at (-126:2);
\coordinate (V5) at (162:2);
\node at (54:2){$1$};
\node at (-18:2){$2$};
\node at (-90:2){$3$};
\node at (126:2){$5$};
\node at (198:2){$4$};
\coordinate (v1) at ({0.66*(cos(18)+cos(-54)+cos(-126))},{0.66*(sin(18)+sin(-54)+sin(-126))});
\coordinate (v2) at ({0.66*(cos(162)+cos(-126)+cos(18))},{0.66*(sin(162)+sin(-126)+sin(18))});
\coordinate (v3) at ({0.66*(cos(90)+cos(162)+cos(18))},{0.66*(sin(90)+sin(162)+sin(18))});
\node at (v1){$\bullet$};
\node at (v2){$\bullet$};
\node at (v3){$\bullet$};
\draw (V3) -- (V4) -- (V1) -- (V2) -- (V5) -- cycle;
\draw (V2) -- (V4);
\draw (V4) -- (V5);
\draw[red] (v1) -- (v2) -- (v3);
\draw[red] (v1) -- (-18:1.8);
\draw[red] (v1) -- (-90:1.8);
\draw[red] (v2) -- (198:1.8);
\draw[red] (v3) -- (54:1.8);
\draw[red] (v3) -- (126:1.8);
\end{tikzpicture}
\caption{The tree and the polyhedral subdivision corresponding to $1((23)4)$}
\label{TheTree}
\end{center}
\end{figure}

By abuse of notation, the resulting graph is still denoted by $F$. An unlabeled vertex $v$ of $F$ is called an internal vertex and the symbol $V(F)_{int}$ denotes the set of internal vertices. An edge $e=(v_1,v_2)$ of $F$ is called an internal edge if both $v_1$ and $v_2$ are internal vertices. The set of internal edges of $F$ is denoted by $E(F)_{int}$. The valency (or degree) $|v|$ of a vertex $v$ is the number of edges containing $v$. If we take any internal edge $e$ from $F$, $F$ is decomposed into a pair of connected components. The component which does not contain the external vertex $n$ defines a subset of labels $a\subset\{ 1,\dots,n-1\}$ consisting of consecutive numbers. This $a$ corresponds to a bracket in $1\cdots(n-1)$. Conversely, any bracket $a\in F$ is obtained from an internal edge in this fashion. 

Trees are identified with each other under the dihedral symmetry. With this in mind, we can also view the tree diagram $F$ as a bracketing of $(i+1)\cdots n1\cdots (i-1)$ for any $i=1,\dots,n-1$. For example, the tree in Figure \ref{TheTree} can be identified with $1((23)4)=((23)4)5=3(4(51))=(4(51))2=(51)(23)$. The connected component of the complement of an internal edge $e$ which does not contain the external vertex $i$ corresponds to a bracket in $(i+1)\cdots n1\cdots (i-1)$. If we regard $F$ as a bracketing of $(i+1)\cdots n1\cdots (i-1)$, any bracket is obtained from an internal edge in this fashion. In the following, we regard a face $F<K_{n-1}$ as a bracketing of the digits $1\cdots(n-1)$ unless otherwise stated.

\begin{defn}
A face $F$ is said to be admissible if for each internal vertex $v$, the valency $|v|$ is odd.
\end{defn}

We conclude this section by recalling a well-known
\begin{prop}[\S2 of \cite{S}]\label{prop:2.3}
Any face $F<K_{n-1}$ is isomorphic to a product of associahedra. To be more precise, one has an isomorphism
\begin{equation}\label{eqn:face}
F\simeq \prod_{v\in V(F)_{\rm int}}K_{|v|-1}.
\end{equation}
\end{prop}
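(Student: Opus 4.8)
The plan is to establish the isomorphism (\ref{eqn:face}) at the level of face posets, working throughout with the interpretation of a face as a polyhedral subdivision of the $n$-gon. Recall that, by the order convention $F\prec F'$, the faces of the polytope $F$ are exactly the bracketings refining $F$; geometrically these are the subdivisions of the $n$-gon obtained by adding further diagonals to the subdivision determined by $F$. Each internal vertex $v\in V(F)_{\rm int}$ corresponds to a cell $\Delta_v$ of this subdivision, and since the valency $|v|$ is the number of edges meeting $v$, the cell $\Delta_v$ is a $|v|$-gon. By the very definition of the associahedron, the subdivisions of a $p$-gon form the face poset of $K_{p-1}$, so the refinements of $\Delta_v$ are governed by $K_{|v|-1}$.

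First I would prove the geometric key lemma: a diagonal $d$ of the $n$-gon is compatible with $F$ (that is, it crosses none of the diagonals of $F$) if and only if $d$ is a diagonal of exactly one cell $\Delta_v$. Indeed, if $d$ crosses no diagonal of $F$, then its open segment is connected and avoids the diagonals of $F$, hence lies in a single open cell $\Delta_v$, which forces both endpoints of $d$ onto $\partial\Delta_v$; the converse is immediate, since a diagonal interior to one cell cannot cross an edge of the subdivision. Moreover, two compatible diagonals lying in distinct cells never cross. Consequently, a collection of pairwise noncrossing diagonals refining $F$ is nothing but an independent choice, one for each $v$, of a collection of pairwise noncrossing diagonals of $\Delta_v$.

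This independence is precisely a multiplicative, order-preserving bijection between the faces of the polytope $F$ and the product $\prod_{v\in V(F)_{\rm int}}(\text{faces of }K_{|v|-1})$, i.e. an isomorphism of face posets that sends $F$ itself to the tuple of full polytopes. Since the face poset of a product of polytopes is the product of their face posets, and since (a product of) associahedra is determined up to combinatorial isomorphism by its face poset, this yields the desired isomorphism $F\simeq\prod_{v\in V(F)_{\rm int}}K_{|v|-1}$.

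As a consistency check I would verify the dimensions. Writing $I=|V(F)_{\rm int}|$ and counting edges in the tree $F$, which has $n$ leaves, the number of internal edges is $I-1=\codim F$, while summing valencies gives $\sum_{v}|v|=n+2(I-1)$; hence $\sum_{v}(|v|-3)=n-2-I$, in agreement with $\dim F=n-3-\codim F$. I expect the main obstacle to be the careful proof of the cell-wise decomposition lemma together with the verification that the resulting bijection is genuinely order-preserving and multiplicative; identifying the subdivisions of a $p$-gon with $K_{p-1}$ and passing from a face-poset isomorphism to a combinatorial isomorphism of polytopes are then standard.
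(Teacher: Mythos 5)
Your proposal is correct and follows essentially the same route as the paper's explanation: a face $F'\prec F$ is a refinement of the polyhedral subdivision determined by $F$, such refinements decompose independently cell by cell, and the cell containing an internal vertex $v$ is a $|v|$-gon, giving the product of face posets. You simply supply more detail (the noncrossing-diagonal lemma, order-preservation, and the dimension check) than the paper's brief sketch.
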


\noindent
For readers' convenience, let us explain the meaning of the isomorphism (\ref{eqn:face}). If $F^\prime<F$ is a face, $F^\prime$ defines a polyhedral subdivision of a planer convex $n$-gon which is a refinement of $F$. Therefore, $F^\prime$ induces a polyhedral subdivision of each polygon appearing in $F$. It is easy to see that the valency $|v|$ at a vertex $v\in V(F)_{int}$ is equal to the number of edges of the polygon containing $v$. Thus, the proposition follows.

\section{Real moduli space $\overline{\mathcal{M}}_{0,n}(\R)$ as a patchwork of associahedra (\cite{Devadoss}, \cite{Kap}, \cite{YoshidaStasheff})}
In this section, we briefly recall the combinatorics of the real part $\overline{\mathcal{M}}_{0,n}(\R)$ of the moduli space $\overline{\mathcal{M}}_{0,n}$ of stable pointed curves of genus $0$. The readers can refer to \cite{Devadoss}, \cite{Kap} or \cite{YoshidaStasheff} for proofs and more explanations.

Let $\mathbb{P}^1$ denote the complex projective line. We set ${\rm Conf}_{n}(\mathbb{P}^1):=\{ (t_1,\dots,t_n)\in (\mathbb{P}^1)^n\mid t_i\neq t_j (1\leq i<j\leq n)\}$. The group ${\rm GL}(2;\C)$ acts on ${\rm Conf}_{n}(\mathbb{P}^1)$ through m\"obius transform of each coordinate $t_i$. The quotient ${\rm Conf}_{n}(\mathbb{P}^1)/{\rm GL}(2;\C)$ is denoted by $\mathcal{M}_{0,n}$. Moving $t_1,t_{n-1}$ and $t_n$ to $0,1$ and $\infty$, we have an identification $\mathcal{M}_{0,n}\simeq \{ (t_2,\dots,t_{n-2})\in\C^{n-3}\}\setminus\bigcup_{i=2}^{n-2}\{ t_i(t_i-1)=0\}\cup\bigcup_{2\leq i<j\leq n-2}\{ t_i=t_j\}$. We set $\Delta:=\{ 0<t_2<\cdots<t_{n-2}<1\}\subset\mathcal{M}_{0,n}$. The permutation group $\mathfrak{S}_n$ of $\{ 1,\dots,n\}$ naturally acts on $\mathcal{M}_{0,n}$ by $\alpha\cdot (t_1,\dots,t_n):=(t_{\alpha^{-1}(1)},\dots,t_{\alpha^{-1}(n)})$. For any element $\alpha\in\mathfrak{S}_n$, we set $\Delta(\alpha):=\alpha\cdot\Delta$. These chambers $\Delta(\alpha)$ cellulate the real part of $\mathcal{M}_{0,n}$.

The Deligne-Knudsen-Mumford compactification $\overline{\mathcal{M}}_{0,n}$ of $\mathcal{M}_{0,n}$ is a smooth projective variety defined over $\mathbb{Q}$ (\cite{Knud}). In this paper, we simply regard it as a complex variety. The complex structure determines the set of real points $\overline{\mathcal{M}}_{0,n}(\R)$, which was investigated in detail in \cite{Devadoss}. For our purpose, it is important to recall the cell decomposition of $\overline{\mathcal{M}}_{0,n}(\R)$ as a patchwork of associahedra. Any element $\alpha\in\mathfrak{S}_n$ is a bijection of the set $\{ 1,\dots,n\}$ and we identify $\alpha$ with the number sequence $\alpha(1)\alpha(2)\cdots\alpha(n)$. In this sense, let $D_n$ be a subgroup of $\mathfrak{S}_n$ generated by two elements $23\cdots n1$ and $n (n-1)\cdots 1$. It is easy to see that $D_n$ is isomorphic to the dihedral group of order $2n$. Moreover, the natural inclusion $\mathfrak{S}_{n-1}\hookrightarrow\mathfrak{S}_{n}$ induces an isomorphism $\mathfrak{S}_{n-1}/\langle (n-1) (n-2) \cdots 1\rangle\simeq\mathfrak{S}_{n}/D_n$. The closure of the cell $\Delta(\alpha)$ in $\overline{\mathcal{M}}_{0,n}(\R)$ gives rise to an associahedron for which we write $K(\alpha)$. Let us choose a representative $[\alpha]\in\mathfrak{S}_n/D_n$ so that $\alpha(n)=n$. Then, the set of brackets on $\alpha(1)\cdots\alpha(n-1)$ forms an associahedron which we identify with $K(\alpha)$. $K(\alpha)$ and $K(\beta)$ are identified in $\overline{\mathcal{M}}_{0,n}(\R)$ precisely when the equivalence classes $[\alpha]$ and $[\beta]$ are identical in the quotient $\mathfrak{S}_{n}/D_n$. On the other hand, any face $F_1<K(\alpha)$ is identified with a face $F_2<K(\beta)$ in $\overline{\mathcal{M}}_{0,n}(\R)$ precisely when the corresponding polyhedral subdivisions of a planer convex $n$-gon are related to each other by a sequence of {\it twists along diagonals} in the sense of \cite[\S3.1]{Devadoss}. If we regard $F_1$ and $F_2$ as labeled trees, they are identified in $\overline{\mathcal{M}}_{0,n}(\R)$ precisely when one is obtained by the other by a sequence of twists around an internal edge (Figure \ref{Isomers}). In particular, twists do not change the set of internal/external vertices nor the set of internal/external edges.

\begin{figure}[H]
\begin{minipage}{0.3\hsize}
\begin{center}
\begin{tikzpicture}
\coordinate (V3) at (90:2);
\coordinate (V4) at (18:2);
\coordinate (V1) at (-54:2);
\coordinate (V2) at (-126:2);
\coordinate (V5) at (162:2);
\node at (54:2){$1$};
\node at (-18:2){$2$};
\node at (-90:2){$3$};
\node at (126:2){$5$};
\node at (198:2){$4$};
\coordinate (v1) at ({0.66*(cos(18)+cos(-54)+cos(-126))},{0.66*(sin(18)+sin(-54)+sin(-126))});
\coordinate (v2) at ({0.66*(cos(162)+cos(-126)+cos(18))},{0.66*(sin(162)+sin(-126)+sin(18))});
\coordinate (v3) at ({0.66*(cos(90)+cos(162)+cos(18))},{0.66*(sin(90)+sin(162)+sin(18))});
\node at (v1){$\bullet$};
\node at (v2){$\bullet$};
\node at (v3){$\bullet$};
\draw (V3) -- (V4) -- (V1) -- (V2) -- (V5) -- cycle;
\draw (V2) -- (V4);
\draw (V4) -- (V5);
\draw[red] (v1) -- (v2) -- (v3);
\draw[red] (v1) -- (-18:1.8);
\draw[red] (v1) -- (-90:1.8);
\draw[red] (v2) -- (198:1.8);
\draw[red] (v3) -- (54:1.8);
\draw[red] (v3) -- (126:1.8);
\end{tikzpicture}
\end{center}
\end{minipage}
\begin{minipage}{0.3\hsize}
\begin{center}
\begin{tikzpicture}
\coordinate (V3) at (90:2);
\coordinate (V4) at (18:2);
\coordinate (V1) at (-54:2);
\coordinate (V2) at (-126:2);
\coordinate (V5) at (162:2);
\path (V3) ++ (0,0.2) coordinate (VV3);
\path (V4) ++ (0,0.2) coordinate (VV4);
\path (V5) ++ (0,0.2) coordinate (VV5);
\path (V3) ++ (0,0.8) coordinate (A);
\path (V3) ++ (0,-1.6) coordinate (B);
\path (V3) ++ (0,0.4) coordinate (C);
\node at (54:2){$1$};
\node at (-18:2){$2$};
\node at (-90:2){$3$};
\node at (126:2){$5$};
\node at (198:2){$4$};
\node at (C){\rotatebox{180}{$\curvearrowright$}};
\coordinate (v1) at ({0.66*(cos(18)+cos(-54)+cos(-126))},{0.66*(sin(18)+sin(-54)+sin(-126))});
\coordinate (v2) at ({0.66*(cos(162)+cos(-126)+cos(18))},{0.66*(sin(162)+sin(-126)+sin(18))});
\coordinate (v3) at ({0.66*(cos(90)+cos(162)+cos(18))},{0.66*(sin(90)+sin(162)+sin(18))});
\node at (v1){$\bullet$};
\node at (v2){$\bullet$};
\node at (v3){$\bullet$};
\draw[dashed] (A) -- (B);
\draw (V4) -- (V1) -- (V2) -- (V5) -- cycle;
\draw (VV3) -- (VV4) -- (VV5) -- cycle;
\draw (V2) -- (V4);
\draw (V4) -- (V5);
\draw[red] (v1) -- (v2) -- (v3);
\draw[red] (v1) -- (-18:1.8);
\draw[red] (v1) -- (-90:1.8);
\draw[red] (v2) -- (198:1.8);
\draw[red] (v3) -- (54:1.8);
\draw[red] (v3) -- (126:1.8);
\end{tikzpicture}
\end{center}
\end{minipage}
\begin{minipage}{0.3\hsize}
\begin{center}
\begin{tikzpicture}
\coordinate (V3) at (90:2);
\coordinate (V4) at (18:2);
\coordinate (V1) at (-54:2);
\coordinate (V2) at (-126:2);
\coordinate (V5) at (162:2);
\node at (54:2){$5$};
\node at (-18:2){$2$};
\node at (-90:2){$3$};
\node at (126:2){$1$};
\node at (198:2){$4$};
\coordinate (v1) at ({0.66*(cos(18)+cos(-54)+cos(-126))},{0.66*(sin(18)+sin(-54)+sin(-126))});
\coordinate (v2) at ({0.66*(cos(162)+cos(-126)+cos(18))},{0.66*(sin(162)+sin(-126)+sin(18))});
\coordinate (v3) at ({0.66*(cos(90)+cos(162)+cos(18))},{0.66*(sin(90)+sin(162)+sin(18))});
\node at (v1){$\bullet$};
\node at (v2){$\bullet$};
\node at (v3){$\bullet$};
\draw (V3) -- (V4) -- (V1) -- (V2) -- (V5) -- cycle;
\draw (V2) -- (V4);
\draw (V4) -- (V5);
\draw[red] (v1) -- (v2) -- (v3);
\draw[red] (v1) -- (-18:1.8);
\draw[red] (v1) -- (-90:1.8);
\draw[red] (v2) -- (198:1.8);
\draw[red] (v3) -- (54:1.8);
\draw[red] (v3) -- (126:1.8);
\end{tikzpicture}
\end{center}
\end{minipage}
\caption{A twist along a diagonal amounts to taking an {\it isomer}}
\label{Isomers}
\end{figure}
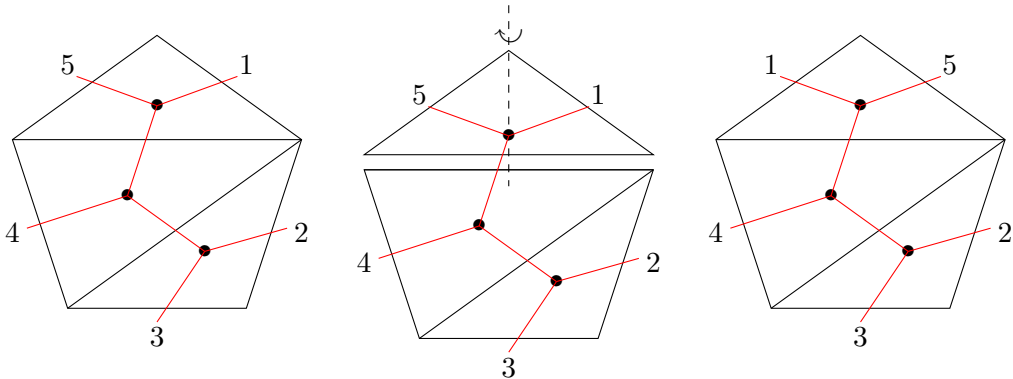

Here, it is an important question how we can compute the intersection $K(\alpha)\cap K(\beta)$ in $\overline{\mathcal{M}}_{0,n}(\R)$. The answer is quite simple. One can describe the intersection $K(\alpha)\cap K(\beta)$ in terms of a tree diagram. The graphical rule is simple and effective. We call this rule {\it CHY rule} since the same figure appeared in the paper \cite{CHY} by Cachazo, He and Yuan. CHY rule was further investigated in \cite{MI}. Let the symbol $\mathbb{I}_n$ denote the identity permutation $12\cdots n$. Since we have an identity $K(\alpha)\cap K(\beta)=\alpha\cdot (K(\mathbb{I}_n)\cap K(\alpha^{-1}\beta))$, we are reduced to the case of $K(\mathbb{I}_n)\cap K(\alpha)$. CHY rule is defined as follows:

\begin{enumerate}
\item Draw a circle with $n$ marked points $1,2,\dots,n$ arranged in a clock-wise order. Connect the labeled points $\alpha(i)$ and $\alpha(i+1)$ for any $i=1,\dots,n$ by a segment. Here, we put $\alpha(n+1):=\alpha(1)$. 

\item We have several polygons inside the circle. For any pair of polygons $\Delta$ and $\Delta^\prime$, we write $\Delta\sim\Delta^\prime$ if there is a sequence of polygons $\Delta=\Delta_0,\Delta_1,\dots,\Delta_k=\Delta^\prime$ such that $\Delta_i$ and $\Delta_{i+1}$ share a vertex and are in the diagonal position for $i=0,\dots,k-1$. Pick any polygon $\Delta$ with at least one marked point as a vertex. We write $\{ \Delta_1,\dots,\Delta_a\}$ for the set of polygons $\Delta^\prime$ such that $\Delta\sim\Delta^\prime$.

\item Associate a vertex $v_i$ to the barycenter of each $\Delta_i$. Connect each $v_i$ to the marked points in $\Delta_i$. Connect a pair of vertices $v_i$ and $v_j$ if $\Delta_i$ and $\Delta_{j}$ share a vertex and are in a diagonal position.
\end{enumerate}

Here, we assumed that the marked points $1,2,\dots,n$ are in a general position. Namely, they are arranged so that any triplet of segments connecting $\alpha(i)$ and $\alpha(i+1)$ does not have an intersection. The rule above produces a graph $G$. One may notice that there is an ambiguity in the third step and $G$ is not uniquely determined. Nonetheless, we have a

\begin{prop}\label{prop:CHY}
Let $G$ be the graph produced by CHY rule. The intersection $F=K(\mathbb{I}_n)\cap K(\alpha)$ is non-empty if and only if $G$ is a tree. If $G$ is a tree, $G$ is uniquely determined and it is the tree diagram corresponding to the face $F$ of $K(\mathbb{I}_n)$. 
\end{prop}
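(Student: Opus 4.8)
\section*{Proof proposal for Proposition \ref{prop:CHY}}

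The plan is to translate the entire statement into the language of trees and then read off the conclusion from the combinatorics of the two inscribed polygons. The starting point is the dictionary recalled above: a face of $K(\mathbb{I}_n)$ is a planar labelled tree whose leaves lie on the boundary circle in the order $1,2,\dots,n$, and by the twist description of the gluing of $\overline{\mathcal{M}}_{0,n}(\R)$ such a face also lies in $K(\alpha)$ precisely when the underlying abstract tree admits a planar embedding with its leaves in the cyclic order $\alpha(1),\dots,\alpha(n)$. Equivalently, every split of the tree (the bipartition of the leaves induced by an internal edge) must be simultaneously an arc in the $\mathbb{I}_n$-order and an arc in the $\alpha$-order. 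Consequently $F=K(\mathbb{I}_n)\cap K(\alpha)$, if non-empty, is the single common face of the two cells of the complex $\overline{\mathcal{M}}_{0,n}(\R)$, and it corresponds to a well-defined tree $T_0$ determined by the pair $(\mathbb{I}_n,\alpha)$ whose splits are common arcs. To prove the proposition it then suffices to match the output $G$ of the CHY rule with this $T_0$.

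First I would analyse the planar arrangement produced in Steps 1 and 2. The edges of the $\alpha$-polygon join points that are consecutive in the $\alpha$-order, hence are non-crossing there, but they are genuine chords once the marked points are placed in the $\mathbb{I}_n$-order on the circle; thus each self-crossing of the $\alpha$-polygon is exactly a place where the two cyclic orders disagree, which is the feature the rule is designed to detect. I would then show that the regions of the arrangement carrying a marked point are the candidates for internal vertices, the marked points are the leaves, and that the relation $\sim$ (passing diagonally through a crossing) glues together precisely those regions that are \emph{not} separated in the $\alpha$-order. The key local observation is that moving diagonally through a crossing means crossing a chord which is a boundary edge of the $\alpha$-polygon, so it records no genuine split; this identifies one $\sim$-class with the correct set of internal vertices and identifies the diagonal adjacencies of Step 3 with the prospective internal edges of $T_0$.

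With this dictionary in hand I would prove the two implications. If $F\neq\varnothing$, the tree $T_0$ is planar in both orders; superimposing its two planar embeddings, the internal vertices of $T_0$ become the $\sim$-class of regions and its internal edges become the diagonal crossings, so the CHY rule reproduces $T_0$ and in particular $G$ is a tree equal to the tree diagram of $F$. Conversely, if $G$ is a tree, reading it as a labelled graph yields a tree with leaves $1,\dots,n$; by construction the leaves sit on the circle in $\mathbb{I}_n$-order, so $G$ is $\mathbb{I}_n$-planar, and each region is a cell cut out by the $\alpha$-polygon, so the splits recorded by $G$ are $\alpha$-arcs, whence $G$ is compatible with both orders and defines a non-empty common face. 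Acyclicity is the heart of the matter: the graph $G$ is connected because the chosen $\sim$-class is, so $G$ is a tree if and only if it carries no cycle; a cycle is a closed chain of regions glued through crossings, which encircles a non-trivial set of marked points and therefore produces a bipartition that is an arc in one order but not the other, obstructing any common tree. I would make this quantitative by a short Euler-characteristic count on $G$, equating ``$G$ has no cycle'' with ``$F$ is non-empty''.

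Finally, for the uniqueness clause I would note that the only ambiguity in Step 3 is the choice, at each crossing, of which of the two diagonal pairs of regions to join; joining both pairs would create a $4$-cycle, so at most one choice per crossing can belong to a tree. Since $T_0$ is uniquely determined by $(\mathbb{I}_n,\alpha)$, the tree-producing choice is forced and $G=T_0$. I expect the main obstacle to be the middle step: making the correspondence ``regions and crossings versus internal vertices and internal edges'' together with the role of $\sim$ completely rigorous in the presence of arbitrary self-intersections of the $\alpha$-polygon, and proving cleanly that a cycle in $G$ is equivalent to the failure of simultaneous arc-compatibility. Once this geometric bookkeeping is settled, the reduction to $T_0$, the equivalence, and the uniqueness should follow formally.
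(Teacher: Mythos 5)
Your argument rests on the dictionary set up in your first paragraph: that a face of $K(\mathbb{I}_n)$ ``also lies in $K(\alpha)$ precisely when the underlying abstract tree admits a planar embedding with its leaves in the cyclic order $\alpha(1),\dots,\alpha(n)$'', equivalently that every split of the tree is an arc in both orders. Only the ``only if'' half of this is true. By the gluing rule recalled in \S3 of the paper, a face $F_1<K(\mathbb{I}_n)$ is identified with a face of $K(\alpha)$ exactly when the two labelled subdivisions are related by a sequence of twists along the diagonals \emph{of $F_1$}; hence the cells of $\overline{\mathcal{M}}_{0,n}(\R)$ containing a codimension-$k$ face are only the $2^{k}$ labellings obtained from $\mathbb{I}_n$ by reversing arcs cut off by diagonals of $F_1$, which is in general a much smaller set than the set of cyclic orders in which the abstract tree is planar. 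Concretely, for $n=5$ and $\alpha=32145$ the tree with the single split $\{2,3\}\mid\{1,4,5\}$ is planar in both cyclic orders $12345$ and $32145$, yet $K(\mathbb{I}_5)\cap K(32145)$ is the codimension-one face with split $\{1,2,3\}\mid\{4,5\}$, of which the former is not a face. The same example shows that your $T_0$ is not well defined: the common arcs of the two orders include $\{1,2\}$, $\{2,3\}$ and $\{1,2,3\}$, and the first two are crossing splits, so ``the tree whose splits are the common arcs'' need not exist, and the splits of the actual intersection face form a proper subset of the common arcs. This breaks the reduction to $T_0$, the converse implication in your third paragraph (``the splits recorded by $G$ are $\alpha$-arcs, whence $G$ defines a non-empty common face''), and with it the equivalence between acyclicity of $G$ and non-emptiness of $F$.

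For comparison, the paper argues constructively in the opposite direction and thereby never needs a planarity criterion: starting from a non-empty face $F$ it performs the twists explicitly, flipping a maximal bracket at each step to produce an ``hourglass'', which simultaneously reconstructs the unique class $[\alpha]$ with $K(\mathbb{I}_n)\cap K(\alpha)=F$ and exhibits the CHY picture for that $\alpha$ as the hourglass, whose sections and pinch points are visibly the internal vertices and edges of the tree of $F$; injectivity of $[\alpha]\mapsto F$ then closes the argument. If you want to keep your direct analysis of the chord arrangement, you must replace ``planar in both orders'' by twist-reachability along the diagonals of the candidate face throughout, and in particular show that a cycle in $G$ obstructs twist-reachability rather than mere common planarity; that is where the real content of the proposition lies.
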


\begin{proof}
Recall that the equivalence class $[\alpha]\in\mathfrak{S}_n/D_n$ is uniquely determined by the intersection $F=K(\mathbb{I}_n)\cap K(\alpha)$ if it is non-empty. Therefore, it is enough to prove that if the intersection $F$ is non-empty, the graph produced by CHY rule is a tree and it coincides with the tree diagram corresponding to $F$. 

We regard $F$ as a collection of brackets in $12\cdots (n-1)$. We choose a maximal (with respect to inclusion) element $a\in F$. Let us consider a planer convex $n$-gon whose vertices are labeled by $1,2,\dots,n$ in a clockwise order. Since $a$ is a set of consecutive digits $a=i\cdots j$, we can flip the vertices $i\cdots j$ to obtain an {\it hourglass}\footnote{One can also regard the hourglass as a {\it bubble} (\cite{Devadoss}).}. We replace $F$ by $F\setminus \{ a\}$ and repeat this process. Each time we pick a maximal element $a\in F$, we flip the vertices contained in $a$ to obtain an hourglass with several sections. If we regard this hourglass as a tree diagram, it is the tree diagram corresponding to $F$. An example of $F=12(3(456)7)$ is illustrated in Figure \ref{Hourglass}. We can also recover the permutation $\alpha$ such that $K(\mathbb{I}_n)\cap K(\alpha)=F$ as follows. We begin with the consecutive digits $\alpha=12\cdots n$. Each time we take a maximal element $a=i\cdots j\in F$, we revert the digits $i\cdots j$ or $j\cdots i$ in $\alpha$. In the end, we arrive at a number sequence $\alpha(1)\cdots \alpha(n-1)\alpha(n)$. For example, if we take $F=12(3(456)7)$, the process is $12345678\rightarrow 12765438\rightarrow 12745638$. By construction, CHY rule applied to the permutation $\alpha$ produces the tree diagram corresponding to $F$.

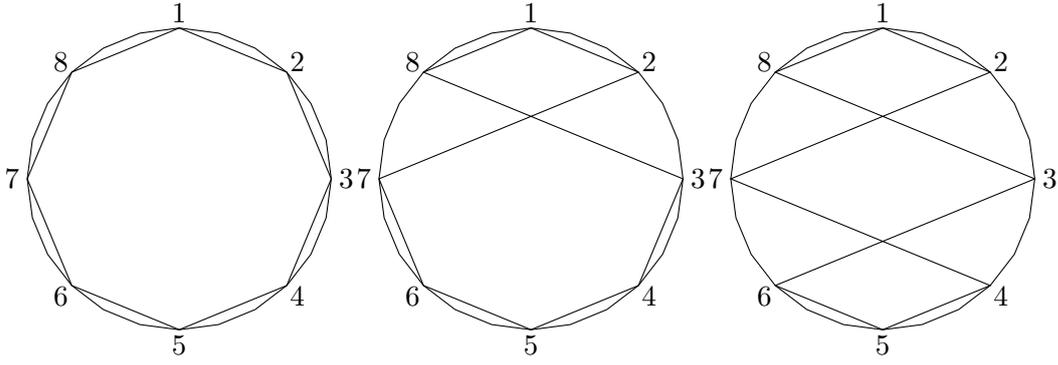
\begin{figure}[H]
\begin{minipage}{0.3\hsize}
\begin{center}
\begin{tikzpicture}
\draw[-=.5,domain=0:360] plot ({2*cos(\x)},{2*sin(\x)});
\coordinate (V1) at (90:2);
\coordinate (V2) at (45:2);
\coordinate (V3) at (0:2);
\coordinate (V4) at (-45:2);
\coordinate (V5) at (-90:2);
\coordinate (V6) at (-135:2);
\coordinate (V7) at (-180:2);
\coordinate (V8) at (-225:2);
\node at (90:2.2){$1$};
\node at (45:2.2){$2$};
\node at (0:2.2){$3$};
\node at (-45:2.2){$4$};
\node at (-90:2.2){$5$};
\node at (-135:2.2){$6$};
\node at (-180:2.2){$7$};
\node at (-225:2.2){$8$};
\draw (V1) -- (V2) -- (V3) -- (V4) -- (V5) -- (V6) -- (V7) -- (V8) -- cycle;
\end{tikzpicture}
\end{center}
\end{minipage}
\begin{minipage}{0.3\hsize}
\begin{center}
\begin{tikzpicture}
\draw[-=.5,domain=0:360] plot ({2*cos(\x)},{2*sin(\x)});
\coordinate (V1) at (90:2);
\coordinate (V2) at (45:2);
\coordinate (V3) at (0:2);
\coordinate (V4) at (-45:2);
\coordinate (V5) at (-90:2);
\coordinate (V6) at (-135:2);
\coordinate (V7) at (-180:2);
\coordinate (V8) at (-225:2);
\node at (90:2.2){$1$};
\node at (45:2.2){$2$};
\node at (0:2.2){$3$};
\node at (-45:2.2){$4$};
\node at (-90:2.2){$5$};
\node at (-135:2.2){$6$};
\node at (-180:2.2){$7$};
\node at (-225:2.2){$8$};
\draw (V1) -- (V2) -- (V7) -- (V6) -- (V5) -- (V4) -- (V3) -- (V8) -- cycle;
\end{tikzpicture}
\end{center}
\end{minipage}
\begin{minipage}{0.3\hsize}
\begin{center}
\begin{tikzpicture}
\draw[-=.5,domain=0:360] plot ({2*cos(\x)},{2*sin(\x)});
\coordinate (V1) at (90:2);
\coordinate (V2) at (45:2);
\coordinate (V3) at (0:2);
\coordinate (V4) at (-45:2);
\coordinate (V5) at (-90:2);
\coordinate (V6) at (-135:2);
\coordinate (V7) at (-180:2);
\coordinate (V8) at (-225:2);
\node at (90:2.2){$1$};
\node at (45:2.2){$2$};
\node at (0:2.2){$3$};
\node at (-45:2.2){$4$};
\node at (-90:2.2){$5$};
\node at (-135:2.2){$6$};
\node at (-180:2.2){$7$};
\node at (-225:2.2){$8$};
\draw (V1) -- (V2) -- (V7) -- (V4) -- (V5) -- (V6) -- (V3) -- (V8) -- cycle;
\end{tikzpicture}
\end{center}
\end{minipage}
\label{Hourglass}
\caption{Producing an hourglass with several sections}
\end{figure}

\end{proof}

\begin{rem}\label{rem:std}
We call $\alpha\in\mathfrak{S}_n$ a standard representative if $\alpha$ is obtained by the following process: If $n=3$, $\alpha=123$ is the unique standard representative. Suppose $n\geq 4$. We begin with a consecutive digits $12\cdots (n-1)$. First, we choose a consecutive digits $i(i+1)$ and revert them. We write $\alpha^\prime(1)\alpha^\prime(2)\cdots\alpha^\prime(n-1)$ for the resulting sequence. Then, we choose a consecutive digits $\alpha^\prime(i^\prime)\alpha^\prime(i^\prime+1)$ and revert it to $\alpha^\prime(i^\prime+1)\alpha^\prime(i^\prime)$ unless $(\alpha^\prime(i^\prime),\alpha^\prime(i^\prime+1))=(1,n-1)$. We repeat this process and we arrive at a sequence $\alpha(1)\cdots \alpha(n-1)$. A standard representative $\alpha\in\mathfrak{S}_n$ is then given by $\alpha(1)\cdots \alpha(n-1)n$. For example, if $n=4$, standard representatives are $1234, 1324, 2134$. Clearly, standard representatives give rise to a complete set of representatives of the quotient $\mathfrak{S}_n/D_n$. Under the identification $\mathcal{M}_{0,n}\simeq \{ (t_2,\dots,t_{n-2})\in\C^{n-3}\}\setminus\bigcup_{i=2}^{n-2}\{ t_i(t_i-1)=0\}\cup\bigcup_{2\leq i<j\leq n-2}\{ t_i=t_j\}$, each standard representative $\alpha\in\mathfrak{S}_{n}$ corresponds to a chamber $\Delta(\alpha)=\{ t_{\alpha(1)}<\cdots<t_{\alpha(n-1)}\}$ where we have set $t_1=0$ and $t_{n-1}=1$. 
\end{rem}

\begin{exa}\label{exa:3.3}
We take $\alpha=134256\in\mathfrak{S}_6$. CHY rule produces the graph as in Figure \ref{TheRule0}.

\begin{figure}[H]
\begin{center}
\begin{tikzpicture}
\draw[-=.5,domain=0:360] plot ({2*cos(\x)},{2*sin(\x)});
\coordinate (2) at ({2*cos(0)},{2*sin(0)});
\coordinate (1) at ({2*cos(60)},{2*sin(60)});
\coordinate (6) at ({2*cos(120)},{2*sin(120)});
\coordinate (5) at ({2*cos(180)},{2*sin(180)});
\coordinate (4) at ({2*cos(240)},{2*sin(240)});
\coordinate (3) at ({2*cos(280)},{2*sin(280)});
\node at (60:2.3){$1$};
\node at (0:2.3){$2$};
\node at (120:2.3){$6$};
\node at (180:2.3){$5$};
\node at (240:2.3){$4$};
\node at (280:2.3){$3$};
\draw[name path=s13] (1) -- (3);
\draw[name path=s34] (3) -- (4);
\draw[name path=s42] (4) -- (2);
\draw[name path=s25] (2) -- (5);
\draw[name path=s56] (5) -- (6);
\draw[name path=s61] (6) -- (1);
\path[name intersections={of= s13 and s42, by={A}}];
\path[name intersections={of= s13 and s25, by={B}}];
\path (A)++(0.3,0.6) coordinate (A1);
\path (A)++(-0.4,-0.8) coordinate (A2);
\coordinate (B1) at ($ (A1) !5! (B) $ );
\draw[red] (A1) -- ($ (A1) !1.1! (2) $ );
\draw[red] (A2) -- ($ (A2) !1.2! (3) $ );
\draw[red] (A2) -- ($ (A2) !1.1! (4) $ );
\draw[red] (B1) -- ($ (B1) !1.1! (1) $ );
\draw[red] (B1) -- ($ (B1) !1.1! (5) $ );
\draw[red] (B1) -- ($ (B1) !1.1! (6) $ );
\draw[red] (A1) -- (A2);
\draw[red] (A1) -- (B1);
\node at (A1){$\bullet$};
\node at (A2){$\bullet$};
\node at (B1){$\bullet$};
\end{tikzpicture}
\end{center}
\caption{The tree corresponding to the intersection $K(\mathbb{I}_6)\cap K(134256)$.}
\label{TheRule0}
\end{figure}
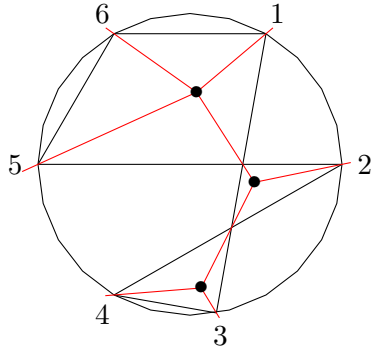

Let us take $\alpha=146325\in\mathfrak{S}_6$. In this case, the graph produced by the CHY rule depends on the configuration of the marked points $1,2,\dots,6$ (Figure \ref{TheRule}). However, it has a cycle in any case. This means that the intersection $K(\mathbb{I}_6)\cap K(146325)$ is empty in $\overline{\mathcal{M}}_{0,6}(\R)$.

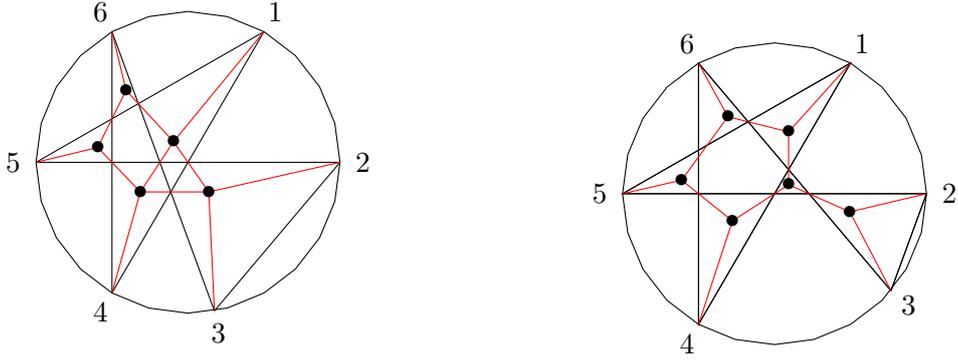
\begin{figure}[H]
\begin{minipage}{0.5\hsize}
\begin{center}
\begin{tikzpicture}
\draw[-=.5,domain=0:360] plot ({2*cos(\x)},{2*sin(\x)});
\coordinate (2) at ({2*cos(0)},{2*sin(0)});
\coordinate (1) at ({2*cos(60)},{2*sin(60)});
\coordinate (6) at ({2*cos(120)},{2*sin(120)});
\coordinate (5) at ({2*cos(180)},{2*sin(180)});
\coordinate (4) at ({2*cos(240)},{2*sin(240)});
\coordinate (3) at ({2*cos(280)},{2*sin(280)});
\node at (60:2.3){$1$};
\node at (0:2.3){$2$};
\node at (120:2.3){$6$};
\node at (180:2.3){$5$};
\node at (240:2.3){$4$};
\node at (280:2.3){$3$};
\draw[name path=s14] (1) -- (4);
\draw[name path=s46] (4) -- (6);
\draw[name path=s63] (6) -- (3);
\draw[name path=s32] (3) -- (2);
\draw[name path=s25] (2) -- (5);
\draw[name path=s51] (5) -- (1);
\path[name intersections={of= s14 and s63, by={A}}];
\path[name intersections={of= s14 and s25, by={B}}];
\path[name intersections={of= s46 and s25, by={C}}];
\path[name intersections={of= s46 and s51, by={D}}];
\path[name intersections={of= s51 and s63, by={E}}];
\path[name intersections={of= s25 and s63, by={F}}];
\path (A)++(0.5,0) coordinate (A1);
\draw[name path=L1,opacity=0] ($ (A1) !3! (A) $ ) -- ($ (A) !3! (A1) $);
\draw[name path=L2,opacity=0] ($ (A1) !3! (B) $ ) -- ($ (B) !3! (A1) $);
\path (A)++(-0.4,0) coordinate (A2);
\draw[name path=L3,opacity=0] ($ (A2) !3! (F) $ ) -- ($ (F) !3! (A2) $);
\path[name intersections={of= L2 and L3, by={B1}}];
\draw[name path=L4,opacity=0] ($ (B1) !3! (E) $ ) -- ($ (E) !3! (B1) $);
\draw[name path=L5,opacity=0] ($ (A2) !3! (C) $ ) -- ($ (C) !3! (A2) $);
\coordinate (E1) at ($ (B1) !1.35! (E) $);
\draw[name path=L6,opacity=0] ($ (E1) !3! (D) $ ) -- ($ (D) !3! (E1) $);
\path[name intersections={of= L5 and L6, by={C1}}];
\draw[red] (A1) -- (2);
\draw[red] (A1) -- (3);
\draw[red] (A1) -- (B1);
\draw[red] (A1) -- (A2);
\draw[red] (A2) -- (4);
\draw[red] (B1) -- (1);
\draw[red] (B1) -- (E1);
\draw[red] (E1) -- (6);
\draw[red] (C1) -- (E1);
\draw[red] (C1) -- (5);
\draw[red] (A2) -- (C1);
\draw[red] (A2) -- (B1);
\node at (A1){$\bullet$};
\node at (A2){$\bullet$};
\node at (B1){$\bullet$};
\node at (C1){$\bullet$};
\node at (E1){$\bullet$};
\end{tikzpicture}
\end{center}
\end{minipage}
\begin{minipage}{0.5\hsize}
\begin{center}
\begin{tikzpicture}
\draw[-=.5,domain=0:360] plot ({2*cos(\x)},{2*sin(\x)});
\coordinate (2) at ({2*cos(0)},{2*sin(0)});
\coordinate (1) at ({2*cos(60)},{2*sin(60)});
\coordinate (6) at ({2*cos(120)},{2*sin(120)});
\coordinate (5) at ({2*cos(180)},{2*sin(180)});
\coordinate (4) at ({2*cos(240)},{2*sin(240)});
\coordinate (3) at ({2*cos(320)},{2*sin(320)});
\node at (60:2.3){$1$};
\node at (0:2.3){$2$};
\node at (120:2.3){$6$};
\node at (180:2.3){$5$};
\node at (240:2.3){$4$};
\node at (320:2.3){$3$};
\draw (1) -- (4) -- (6) -- (3) -- (2) -- (5) -- cycle;
\draw[name path=s14] (1) -- (4);
\draw[name path=s46] (4) -- (6);
\draw[name path=s63] (6) -- (3);
\draw[name path=s32] (3) -- (2);
\draw[name path=s25] (2) -- (5);
\draw[name path=s51] (5) -- (1);
\path[name intersections={of= s14 and s63, by={A}}];
\path[name intersections={of= s14 and s25, by={B}}];
\path[name intersections={of= s46 and s25, by={C}}];
\path[name intersections={of= s46 and s51, by={D}}];
\path[name intersections={of= s51 and s63, by={E}}];
\path[name intersections={of= s25 and s63, by={F}}];
\path (A)++(0,-0.2) coordinate (A1);
\draw[name path=L1,opacity=0] ($ (A1) !5! (A) $ ) -- ($ (A) !5! (A1) $);
\draw[name path=L2,opacity=0] ($ (A1) !7! (B) $ ) -- ($ (B) !7! (A1) $);
\path (A)++(0,0.5) coordinate (A2);
\draw[name path=L3,opacity=0] ($ (A1) !5! (F) $ ) -- ($ (F) !5! (A1) $);
\draw[name path=L4,opacity=0] ($ (A2) !5! (E) $ ) -- ($ (E) !5! (A2) $);
\coordinate (B1) at ($ (A1) !4! (B) $ );
\draw[name path=L5,opacity=0] ($ (B1) !5! (C) $ ) -- ($ (C) !5! (B1) $);
\coordinate(E1) at ($ (A2) !1.5! (E) $ );
\draw[name path=L6,opacity=0] ($ (E1) !5! (D) $ ) -- ($ (D) !5! (E1) $);
\coordinate (F1) at ($ (A1) !3! (F) $ );
\coordinate (C1) at ($ (B1) !1.5! (C) $ );
\draw[red] (A1) -- (F1);
\draw[red] (2) -- (F1);
\draw[red] (3) -- (F1);
\draw[red] (A1) -- (B1);
\draw[red] (A2) -- (1);
\draw[red] (A2) -- (E1);
\draw[red] (6) -- (E1);
\draw[red] (B1) -- (C1);
\draw[red] (B1) -- (4);
\draw[red] (5) -- (C1);
\draw[red] (A1) -- (A2);
\draw[red] (E1) -- (C1);
\node at (A1){$\bullet$};
\node at (A2){$\bullet$};
\node at (B1){$\bullet$};
\node at (C1){$\bullet$};
\node at (E1){$\bullet$};
\node at (F1){$\bullet$};
\end{tikzpicture}
\end{center}
\end{minipage}
\caption{CHY rule may produce different graphs}
\label{TheRule}
\end{figure}

\end{exa}

Let us recall the definition of the relative winding number $w(\alpha|\beta)$ (\cite{M}). The rule is to first draw the permutation $\alpha$ on a circle in a clockwise order, and then follow the points according to the other permutation $\beta$ by always going clockwise. The relative winding number $w(\alpha|\beta)$, is then given by the total number of cycles completed. As an example, we have $w(\mathbb{I}_5|31425)=3$ as in Figure \ref{fig:RWN}.

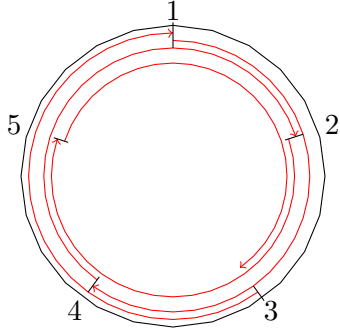
\begin{figure}[H]
\begin{center}
\begin{tikzpicture}
\coordinate (V1) at (90:2);
\coordinate (V2) at (18:2);
\coordinate (V3) at (-54:2);
\coordinate (V4) at (-126:2);
\coordinate (V5) at (162:2);
\draw[-=.5,domain=0:360] plot ({2*cos(\x)},{2*sin(\x)});
\node at (90:2.2){$1$};
\node at (18:2.2){$2$};
\node at (-54:2.2){$3$};
\node at (-126:2.2){$4$};
\node at (162:2.2){$5$};
\draw[red,->=.5,domain=-54:-270] plot ({1.9*cos(\x)},{1.9*sin(\x)});
\draw[red,->=.5,domain=90:-126] plot ({1.8*cos(\x)},{1.8*sin(\x)});
\draw[red,->=.5,domain=-126:-342] plot ({1.7*cos(\x)},{1.7*sin(\x)});
\draw[red,->=.5,domain=18:-198] plot ({1.6*cos(\x)},{1.6*sin(\x)});
\draw[red,->=.5,domain=162:-54] plot ({1.5*cos(\x)},{1.5*sin(\x)});
\draw[-=.5,domain=1.8:2.04] plot (-54:{\x});
\draw[-=.5,domain=1.7:2.04] plot (90:{\x});
\draw[-=.5,domain=1.65:1.9] plot (-126:{\x});
\draw[-=.5,domain=1.55:1.8] plot (18:{\x});
\draw[-=.5,domain=1.45:1.65] plot (162:{\x});
\end{tikzpicture}
\end{center}
\caption{The relative winding number $w(12345|31425)=3$}
\label{fig:RWN}
\end{figure}

\section{A tree expansion formula}\label{sec:4}
We consider complex parameters $s_{ij}$ for $1\leq i<j\leq n$. For any pair $i<j$, we set $s_{ji}:=s_{ij}$. We assume the following constraint on the complex parameters $s_{ij}$:
\begin{equation}\label{eqn:5}
\sum_{1\leq j\leq n, j\neq i}s_{ij}=0\ \ (i=1,\dots,n).
\end{equation}
We take $p\geq 3$ numbers $0\leq\alpha_1<\dots<\alpha_p< n$. We define consecutive digits $a_1,\dots,a_{p+1}$ by $a_1:=\alpha_1+1\cdots \alpha_2$, $a_2:=\alpha_2+1\cdots\alpha_3,\dots$, $a_{p}:=\alpha_p+1\cdots n 1\cdots \alpha_1$. We regard the associahedron $K_{p-1}$ as a set of brackets of digits $a_1a_2\cdots a_{p-1}$. In this sense, $K_{p-1}$ is also denoted by $K_{a_1\cdots a_{p-1}}$. For any bracket $e=(a_i\cdots a_j)$, we set $s_e:=s_{a_i\cdots a_j}=s_{\alpha_{i}+1\cdots\alpha_j}=\sum_{\alpha_{i}< k<l\leq\alpha_j}s_{kl}$. Since any face $F<K_{a_1\cdots a_{p-1}}$ can be reinterpreted as a tree diagram, we set
\begin{equation}
m(a_1,\dots, a_p):=\sum_{F<K_{a_1\cdots a_{p-1}}:admissible}\prod_{v\in V(F)_{int}}C_{\frac{|v|-3}{2}}\prod_{e\in E(F)_{int}}\cot\pi s_e.
\end{equation}
Here, $C_k$ denotes the $k$-th Catalan number. We also write $m(a_i,\dots, a_{p},a_1,\dots,a_{i-1})$ for $m(a_1,\dots, a_p)$. Note that $m(a_1,a_2,a_3)=1$ if $a_1a_2a_3=12\cdots n$.

Now suppose that a face $F<K_{n-1}=K_{12\cdots n-1}$ is given. For any internal vertex $v$ of $F$, we define the consecutive digits $a_1,\dots,a_{|v|}$ as follows: we consider all the edges $e_1,\dots,e_{|v|}$ containing $v$. If we remove an edge $e$ from $F$, $F\setminus e$ is decomposed into two connected components. We consider the component $C$ which does not contain this vertex $v$. The component $C$ has labeled vertices $i\cdots j$ which is a sequence of consecutive numbers. Here, we regard a sequence of the form $a a+1\cdots n 12\cdots b$ ($b<a$) as consecutive numbers. We set $a_e:=i\cdots j$. Without loss of generality, we may assume that $a_{e_1}\cdots a_{e_{|v|}}=1\cdots n$ in a circular sense, namely, we identify $1\cdots n$ with $23\cdots n1$ and so on. We set
\begin{equation}
m_v(F):=m(a_{e_1},\dots, a_{e_{|v|}}).
\end{equation}
If we use a sequence $\alpha(1)\cdots\alpha(n)$ instead of a sequence $12\cdots n$ for some $\alpha\in\mathfrak{S}_n$ and $F<K(\alpha)\simeq K_{\alpha(1)\cdots\alpha(n-1)}$, we write $m_v^{\alpha}(F)$. Note that if $e$ is an internal edge of $F<K(\alpha)$ and if $I$ is a connected component of the complement $F\setminus e$, we have 
\begin{equation}\label{eqn:se}
s_e=\sum_{i,j\in I,\ i<j}s_{ij}.
\end{equation}
Note that the formula (\ref{eqn:se}) does not depend on the choice of the connected component $I$ in view of (\ref{eqn:5}).

We set
\begin{equation}\label{eqn:MizeraFcn}
\Phi(t_1,\dots,t_n):=\prod_{1\leq i<j\leq n}(t_j-t_i)^{s_{ij}}.
\end{equation}
We write $\C\Phi$ for the local system on ${\rm Conf}_n(\mathbb{P}^1)$ whose local section is a determination of the multi-valued function $\Phi$. In view of (\ref{eqn:5}), $\C\Phi$ induces a local system on $\mathcal{M}_{0,n}$ which is still denoted by the same symbol. We write $\C\Phi^{-1}$ for the dual local system of $\C\Phi$. We are interested in computing the twisted homology intersection form at the middle dimension
\begin{equation}
\langle\bullet,\bullet\rangle_h:\Homo_{n-3}(\mathcal{M}_{0,n};\C\Phi)\times\Homo_{n-3}^{lf}(\mathcal{M}_{0,n};\C\Phi^{-1})\rightarrow\C.
\end{equation}
Here, the superscript $lf$ stands for the word ``locally finite'' and $\Homo_{*}^{lf}$ denotes the locally finite (or Borel-Moore) homology group. We say that the regularization condition is satisfied if the canonical morphism
\begin{equation}\label{eqn:can}
\Homo_k(\mathcal{M}_{0,n};\C\Phi)\rightarrow\Homo_k^{lf}(\mathcal{M}_{0,n};\C\Phi)
\end{equation}
is an isomorphism for any $k$. If the regularization condition is satisfied, both the homology group $\Homo_k(\mathcal{M}_{0,n};\C\Phi)$ and the locally finite homology group $\Homo_k^{lf}(\mathcal{M}_{0,n};\C\Phi)$ vanish unless $k=n-3$. The inverse map of the canonical morphism (\ref{eqn:can}) is denoted by $reg$. In order to justify the regularization condition, let us recall the following result.

\begin{prop}[\cite{CDO}]\label{prop:CDO}
Let $\mathscr{A}=\{ H\}$ be a hyperplane arrangement in $\C^n$ and let $\mathscr{A}_\infty:=\{ H\}\cup H_\infty$ be its associated projective arrangement where $H_\infty$ is the hyperplane at infinity. Let $l_H$ be linear forms defining $H$. We consider a local system $\mathcal{L}=\C\prod_{H\in\mathscr{A}}l_H^{\alpha_H}$ on the complement $X:=\C^n\setminus \mathscr{A}$ for some $\alpha_H\in\C$. We set $\alpha_{H_\infty}:=-\sum_{H\in\mathscr{A}}\alpha_H$. 

If for any dense edge $E\in D(\mathscr{A}_\infty)$, the condition $\alpha_E:=\sum_{E\subset H\in\mathscr{A}_\infty}\alpha_H\notin \Z$ holds, then one has a canonical isomorphism
\begin{equation}
\Homo_k(X;\mathcal{L})\simeq\Homo^{lf}_k(X;\mathcal{L})
\end{equation}
for any integer $k$.
\end{prop}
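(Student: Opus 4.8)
The plan is to dualize the statement and reduce it to a local vanishing of twisted cohomology that is controlled combinatorially by the dense edges. First, since $X=\C^n\setminus\mathscr{A}$ is a complex manifold, hence an oriented topological manifold of real dimension $2n$, Poincar\'e--Lefschetz duality gives natural identifications $\Homo_k(X;\mathcal{L})\cong\Homo^{2n-k}_c(X;\mathcal{L})$ and $\Homo^{lf}_k(X;\mathcal{L})\cong\Homo^{2n-k}(X;\mathcal{L})$, under which the canonical morphism of (\ref{eqn:can}) becomes the forget-supports map $\Homo^{j}_c(X;\mathcal{L})\to\Homo^{j}(X;\mathcal{L})$. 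Writing $j\colon X\hookrightarrow\PP^n$ for the open embedding with complement $B:=\bigcup_{H\in\mathscr{A}}\overline{H}\cup H_\infty$, one has $\Homo^{j}_c(X;\mathcal{L})=\Homo^{j}(\PP^n;j_!\mathcal{L})$ and $\Homo^{j}(X;\mathcal{L})=\Homo^{j}(\PP^n;Rj_*\mathcal{L})$, and the forget-supports map is induced by the adjunction morphism $c\colon j_!\mathcal{L}\to Rj_*\mathcal{L}$. It therefore suffices to prove that $c$ is an isomorphism in the derived category of sheaves on $\PP^n$.

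Second, I would reduce this to a statement on each boundary stratum. The cone $\mathcal{C}$ of $c$ is supported on $B$, which is stratified by the open parts $E^\circ$ of the edges $E$ of $\mathscr{A}_\infty$. Along each such edge the pair $(\PP^n,X)$ is locally conically trivial in the transverse direction, so for $p\in E^\circ$ the stalk $(Rj_*\mathcal{L})_p$ is computed by the twisted cohomology $\Homo^*(M(\mathscr{A}_E);\mathcal{L})$ of the complement of the central arrangement $\mathscr{A}_E$ formed by the members of $\mathscr{A}_\infty$ containing $E$, realized in a transverse slice $\C^{\codim E}$. Since $(j_!\mathcal{L})_p=0$ for $p\in B$, the stalk $\mathcal{C}_p$ vanishes if and only if $\Homo^*(M(\mathscr{A}_E);\mathcal{L})=0$ in every degree. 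Hence it is enough to prove this total vanishing for every edge $E$ of $\mathscr{A}_\infty$.

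Third, I would establish the local vanishing by factorization. Decompose the central arrangement $\mathscr{A}_E$ into its irreducible (indecomposable) factors; by the standard correspondence each factor is the localization $\mathscr{A}_{E_i}$ at a dense edge $E_i\in D(\mathscr{A}_\infty)$, and the complement factors as a product $M(\mathscr{A}_E)\cong\prod_i M(\mathscr{A}_{E_i})$ on which $\mathcal{L}$ restricts to an external tensor product. Each irreducible central complement admits the scaling decomposition $M(\mathscr{A}_{E_i})\cong\C^\times\times\PP M(\mathscr{A}_{E_i})$, and the monodromy of $\mathcal{L}$ along the $\C^\times$-factor equals $\exp(2\pi\ii\,\alpha_{E_i})$; by the hypothesis $\alpha_{E_i}\notin\Z$ this monodromy is nontrivial, so $\Homo^*(\C^\times;\mathcal{L}|_{\C^\times})=0$ and K\"unneth forces $\Homo^*(M(\mathscr{A}_{E_i});\mathcal{L})=0$. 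A second application of K\"unneth over the factors yields $\Homo^*(M(\mathscr{A}_E);\mathcal{L})=0$, which is the desired local vanishing and completes the argument. The main obstacle is the constructible-sheaf bookkeeping of the second step---establishing the transverse conical triviality and identifying the stalk of $Rj_*\mathcal{L}$ with the slice cohomology $\Homo^*(M(\mathscr{A}_E);\mathcal{L})$---together with the combinatorial fact that the irreducible factors of a localized arrangement are exactly the localizations at dense edges; granting these, the vanishing itself is the brief K\"unneth computation above, and it is precisely at the dense edges that the non-resonance hypothesis $\alpha_E\notin\Z$ is consumed.
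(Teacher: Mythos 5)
Your argument is correct and is essentially the argument the paper itself points to rather than reproves: the paper derives this proposition from \cite[Lemma 3]{CDO} ``combined with the composition theorem of derived functors,'' and your reduction of the forget-supports map to the adjunction $j_!\mathcal{L}\to Rj_*\mathcal{L}$ on $\PP^n$, followed by the stalkwise vanishing at dense edges via the irreducible-factor decomposition and the $\C^\times$-K\"unneth argument, is precisely the content of that cited lemma together with the identifications $\Homo^j_c(X;\mathcal{L})=\Homo^j(\PP^n;j_!\mathcal{L})$ and $\Homo^j(X;\mathcal{L})=\Homo^j(\PP^n;Rj_*\mathcal{L})$. No discrepancy to report.
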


\noindent
To be more precise, Proposition \ref{prop:CDO} follows from \cite[lemma 3]{CDO} combined with the composition theorem of derived functors. We assume the following condition:

\vspace{.5em}

\noindent
\underline{Condition ($*$)}
\begin{quote}
For any element $[\alpha]\in\mathfrak{S}_n/D_n$, any face $F<K(\alpha)$ and for any internal edge $e$ of $F$, the complex number $s_e$ is not an integer.
\end{quote}
In view of the fact that $\overline{\mathcal{M}}_{0,n}$ can be realized as an iterated blowing-up of $\mathbb{P}^{n-3}$ along dense edges of a hyperplane arrangement (\cite[\S 4]{Devadoss}, \cite[\S4]{Kap}, \cite[Chapter 4]{Kap2}) and each edge $e$ corresponds to a dense edge at which the sum of relevant exponents $s_{ij}$ is precisely $s_e$, the regularization condition is satisfied under the condition ($*$). Namely, both $\Homo_k(\mathcal{M}_{0,n};\C\Phi^\pm)$ and $\Homo_k^{lf}(\mathcal{M}_{0,n};\C\Phi^\pm)$ are zero unless $k=n-3$ and we have a natural isomorphism $\Homo_{n-3}(\mathcal{M}_{0,n};\C\Phi^\pm)\simeq\Homo_{n-3}^{lf}(\mathcal{M}_{0,n};\C\Phi^\pm)$.

Now let us recall the orientable double cover $\overline{\mathcal{M}}_{0,n}^{or}(\R)$ of the real moduli space $\overline{\mathcal{M}}_{0,n}(\R)$ constructed in \cite{DM}. Let $\pi:\overline{\mathcal{M}}_{0,n}^{or}(\R)\rightarrow\overline{\mathcal{M}}_{0,n}(\R)$ be the covering map of loc. cit. and let us fix an orientation of $\overline{\mathcal{M}}_{0,n}^{or}(\R)$. In loc. cit., the authors fix one point $t_n=\infty$ which amounts to the natural bijection $\mathfrak{S}_{n-1}/\langle (n-1) (n-2) \cdots 1\rangle\simeq\mathfrak{S}_{n}/D_n$. Note that $\mathfrak{S}_{n-1}$ can be identified with a subgroup of $\mathfrak{S}_{n}$ consisting of permutations $\alpha$ such that $\alpha(n)=n$.  By construction, the preimage $\pi^{-1}\left( \mathcal{M}_{0,n}(\R)\right)$ is a disjoint union of $(n-1)!$ copies of the associahedron $K_{n-1}$ labeled by the elements of $\mathfrak{S}_{n-1}$. Let the symbol $C(\alpha)$ denote the associahedron in $\pi^{-1}\left( \mathcal{M}_{0,n}(\R)\right)$ labeled by an element $\alpha\in\mathfrak{S}_{n-1}\simeq\mathfrak{S}_n/\langle 23\cdots n1\rangle$. We have $\pi(C(\alpha))=\Delta(\alpha)$. The orientation of $C(\alpha)$ is naturally induced from that of $\overline{\mathcal{M}}_{0,n}^{or}(\R)$. By abuse of notation, we write $C(\alpha)$ for the image of $C(\alpha)$ through the morphism $\pi$ in $\mathcal{M}_{0,n}(\R)$. We choose the {\it standard loading} of the multivalued function $\Phi$ on $C(\alpha)$, that is, we choose the branch of $\Phi$ so that we have $\Phi>0$ on $C(\alpha)$ when all the parameters $s_{ij}$ are real. With this choice of a branch of $\Phi$, $C(\alpha)$ defines an element of the locally finite homology group $\Homo_{n-3}^{lf}(\mathcal{M}_{0,n};\C\Phi)$ which is denoted by $[C^+(\alpha)]$. The same argument defines a homology class $[C^-(\alpha)]$ of $\Homo_{n-3}^{lf}(\mathcal{M}_{0,n};\C\Phi^{-1})$ for any $\alpha\in\mathfrak{S}_{n-1}\simeq\mathfrak{S}_{n}/\langle 23\cdots n1\rangle$. In view of \cite{Kohno}, we see that the twisted homology group $\Homo^{lf}_{n-3}(\mathcal{M}_{0,n};\C\Phi^\pm)$ is generated by $\{[C^\pm(\alpha)]\}_{\alpha\in\mathfrak{S}_{n-1}}$. As a basis of $\Homo^{lf}_{n-3}(\mathcal{M}_{0,n};\C\Phi^\pm)$, one can take, for example, $\{[C^\pm(\alpha)]\}_{\alpha\in\mathfrak{S}_{n-3}}$ where $\mathfrak{S}_{n-3}$ is identified with the set of permutations which fix $1,n-1$ and $n$. This is a basis consisting of bounded chambers. For the cycles $[C^{\pm}(\mathbb{I}_n)]\in\Homo_{n-3}^{lf}\left( \mathcal{M}_{0,n};\C\Phi^\pm\right)$, we have the following formula.
\begin{thm}\label{thm:1}
We assume the condition ($*$). Then, one has a formula
\begin{align}
&\langle reg[C^+(\mathbb{I}_n)],[C^-(\mathbb{I}_n)]\rangle_h\nonumber\\
=&\left(\frac{\ii}{2}\right)^{n-3}m(1,2,\dots,n)\\
=&\left(\frac{\ii}{2}\right)^{n-3}\sum_{F<K_{n-1}:\text{admissible}}\prod_{v\in V(F)_{\rm int}}C_{\frac{|v|-3}{2}}\prod_{e\in E(F)_{\rm int}}\cot(\pi s_e).
\end{align}
\end{thm}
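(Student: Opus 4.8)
The plan is to put both classes on the single associahedron $K(\mathbb{I}_n)$ and to localize the pairing to the open faces of $K(\mathbb{I}_n)$, after which Theorem \ref{prop:WZ} collapses the resulting sum onto the admissible trees. Under condition ($*$), Proposition \ref{prop:CDO} provides the regularization isomorphism, so $reg[C^+(\mathbb{I}_n)]$ is a genuine twisted cycle supported on a small deformation of the closure $K(\mathbb{I}_n)\subset\overline{\mathcal{M}}_{0,n}(\R)$, while $[C^-(\mathbb{I}_n)]$ is represented by $K(\mathbb{I}_n)$ itself carrying the dual loading of $\Phi$. Since $K(\mathbb{I}_n)\cap K(\mathbb{I}_n)=K(\mathbb{I}_n)$, the two supports overlap along the entire associahedron, and the standard local computation of twisted intersection numbers expresses the pairing as a sum of contributions indexed by the open strata, that is, by the faces $F<K_{n-1}$.

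Next I would factorize the contribution of an open face $F$. As $\overline{\mathcal{M}}_{0,n}$ is an iterated blow-up with normal crossing boundary and, by Proposition \ref{prop:2.3}, $F\simeq\prod_{v\in V(F)_{int}}K_{|v|-1}$ lies in the corresponding product stratum, the directions transverse to $F$ decouple into the $\codim F=|E(F)_{int}|$ normal coordinates, one for each internal edge $e$. Each of these reproduces the one-dimensional model of a loaded interval crossing the divisor of monodromy $e^{2\pi\ii s_e}$, and so contributes a factor proportional to $\cot\pi s_e$ (this is the $n=4$ instance of the theorem). The tangential contribution is the interior (codimension-zero stratum) self-intersection of the open top cell of each $K_{|v|-1}$; because this open cell meets no boundary divisor, its value $J(|v|)$ is independent of the parameters $s_{ij}$, and I write $J(k)=(\tfrac{\ii}{2})^{k-3}\nu(k)$ with $\nu(k)$ purely combinatorial. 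Collecting powers of $\tfrac{\ii}{2}$ — the $\codim F$ from the edges together with $\sum_{v}(|v|-3)=\dim F$ from the vertices adding up to $n-3$ — gives the raw tree expansion
\[
\langle reg[C^+(\mathbb{I}_n)],[C^-(\mathbb{I}_n)]\rangle_h=\left(\frac{\ii}{2}\right)^{n-3}\sum_{F<K_{n-1}}\ \prod_{e\in E(F)_{int}}\cot\pi s_e\ \prod_{v\in V(F)_{int}}\nu(|v|),
\]
which is the shape already present in \cite[Lemma 1]{MY}, the cotangents being the advertised sine-like functions.

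It then remains to evaluate $\nu(k)$, the interior self-intersection of the top cell of $K_{k-1}$. Unwinding it through the cellular and relative-winding-number combinatorics of $\overline{\mathcal{M}}_{0,k}(\R)$ recalled in §3 turns it into the combinatorial sum governed by the face numbers of the associahedron, and this is exactly the input of Theorem \ref{prop:WZ}: the identity yields $\nu(k)=0$ when $k$ is even and $\nu(k)=C_{\frac{k-3}{2}}$ when $k$ is odd. Substituting into the raw expansion, every face carrying an internal vertex of even valency is annihilated, the surviving faces are precisely the admissible ones, and the vertex weights become the Catalan numbers $C_{\frac{|v|-3}{2}}$; the right-hand side becomes $\left(\tfrac{\ii}{2}\right)^{n-3}m(1,2,\dots,n)$, as claimed.

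I expect the main obstacle to be the evaluation of $\nu(k)$: one must both identify the interior self-intersection with the combinatorial quantity treated by Theorem \ref{prop:WZ} and carry out the non-trivial cancellation in the cotangent expansion, which is the genuine content of the argument. A secondary, purely bookkeeping, difficulty is to fix the orientations and the overall sign so that the transverse factors assemble into exactly $\left(\tfrac{\ii}{2}\right)^{n-3}$ with no stray $(-1)$; this can be pinned down by the base case $n=4$, where the formula reads $\tfrac{\ii}{2}\left(\cot\pi s_{12}+\cot\pi s_{23}\right)$.
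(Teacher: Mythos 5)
Your overall skeleton --- localize the pairing to the faces of $K(\mathbb{I}_n)$, factor the contribution of a face over its internal vertices via Proposition \ref{prop:2.3}, and let Theorem \ref{prop:WZ} annihilate the non-admissible trees --- matches the paper's, and you correctly identify Theorem \ref{prop:WZ} as the cancellation mechanism. But the middle step, where you pass from the stratum decomposition to the ``raw tree expansion'' with parameter-free vertex weights $\nu(|v|)$ and transverse factors $\cot(\pi s_e)$, is not established, and the justification you give for it is incorrect. The local contribution of the open stratum $F$ to the pairing is $(-1)^{n-3}\prod_{a\in F}\frac{1}{e(s_a)-1}$ (this is \cite[p175, Proposition]{KY}, the form in which the sum appears in \cite[Lemma 1]{MY} and in (\ref{eqn:2})): the one-dimensional transverse model of a loaded interval crossing a divisor contributes $\frac{1}{e^{2\pi\ii s_e}-1}$, which is an affine function of $\cot(\pi s_e)$ but not proportional to it, and the codimension-zero tangential piece contributes the empty product $1$, not $\left(\frac{\ii}{2}\right)^{|v|-3}C_{\frac{|v|-3}{2}}$. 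Already for $n=4$ the three strata contribute $-1$, $-\frac{1}{e(s_{12})-1}$ and $-\frac{1}{e(s_{23})-1}$ separately; the cotangents in $\frac{\ii}{2}\left(\cot\pi s_{12}+\cot\pi s_{23}\right)$ appear only after the $-1$ of the open cell is redistributed between its two endpoints, i.e.\ after regrouping contributions of \emph{different} strata. So a cotangent monomial is not the contribution of any single face, and your $\nu(k)$ cannot be read off as an ``interior self-intersection of the open top cell'' independent of the parameters; no such local quantity equals a Catalan number.

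What is missing is precisely the inversion the paper carries out: the elementary identity $\frac{1}{(T_1-1)\cdots(T_m-1)}=\frac{1}{2^m}\sum_{I}(-1)^{m-|I|}\prod_{i\in I}\frac{T_i+1}{T_i-1}$ rewrites each stratum contribution $\prod_{a\in F}\frac{1}{e(s_a)-1}$ as a signed sum of cotangent monomials indexed by the faces $G$ with $F<G$. Collecting the coefficient of a fixed monomial $\prod_{e\in E(F)_{int}}\cot(\pi s_e)$ then yields the alternating sum $\frac{(-1)^{\codim F}}{2^{n-3}}\sum_{F^\prime<F}(-2)^{\dim F^\prime}$ over \emph{all} faces of $F$; this factors over internal vertices by Proposition \ref{prop:2.3} and is evaluated via the face numbers of the associahedron by Theorem \ref{prop:WZ}. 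That alternating sum over the whole face poset of $K_{|v|-1}$ --- boundary strata very much included --- is the correct meaning of your $\nu(|v|)$. In short, you name the right endpoint and the right combinatorial identity, but you treat as a local geometric fact the one genuinely global regrouping step that the proof has to supply.
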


\noindent
More generally, we have a 
\begin{thm}\label{thm:OffDiagonal}
We assume the condition ($*$). For any $[\alpha],[\beta]\in\mathfrak{S}_n/\langle 23\cdots n1\rangle$ with a non-empty intersection\footnote{If $K(\alpha)\cap K(\beta)=\varnothing$ in $\overline{\mathcal{M}}_{0,n}(\R)$, one has $\langle reg[C^+(\alpha)],[C^-(\beta)]\rangle_h=0$ by the definition of the twisted homology intersection number.} $K(\alpha)\cap K(\beta)=F$ in $\overline{\mathcal{M}}_{0,n}(\R)$, one has a formula
\begin{equation}\label{eqn:OffD}
\langle reg[C^+(\alpha)],[C^-(\beta)]\rangle_h=(-1)^{w(\alpha|\beta)+1}\left(\frac{\ii}{2}\right)^{n-3}\prod_{e\in E(F)_{int}}\csc(\pi s_e)\prod_{v\in V(F)_{int}}m_v^\alpha(F).
\end{equation}

\end{thm}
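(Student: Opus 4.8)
The plan is to localize the pairing at the common face $F=K(\alpha)\cap K(\beta)$ and to exploit the product decomposition $F\cong\prod_{v\in V(F)_{int}}K_{|v|-1}$ of Proposition \ref{prop:2.3}. Using the $\mathfrak{S}_n$-equivariance of the intersection form, namely $\langle reg[C^+(\alpha)],[C^-(\beta)]\rangle_h=\langle reg[C^+(\mathbb{I}_n)],[C^-(\alpha^{-1}\beta)]\rangle_h$ after relabelling the $s_{ij}$ by $\alpha$, I would first reduce to $\alpha=\mathbb{I}_n$ and identify $F$ with a face of $K(\mathbb{I}_n)$, i.e.\ with a tree, via the CHY rule (Proposition \ref{prop:CHY}). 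Since the twist (isomer) identifications in $\overline{\mathcal{M}}_{0,n}(\R)$ preserve the internal vertices and edges of this tree, the data $E(F)_{int}$, $V(F)_{int}$ and the labels entering $m_v^\alpha(F)$ are intrinsic to $F$.

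Next I would expand $reg[C^+(\mathbb{I}_n)]$ in the standard Kita--Yoshida form $reg[C^+(\mathbb{I}_n)]=\sum_{G}\big(\prod_{e\in E(G)_{int}}\tfrac{1}{d_e-1}\big)\,T_G$, where $d_e=e^{2\pi\ii s_e}$ and $T_G$ is a regularizing tube wrapping the boundary divisors $D_e$ ($e\in E(G)_{int}$) transversally over the face $G$. Because $\overline{\mathcal{M}}_{0,n}$ is an iterated blow-up along the relevant dense edges and hence has normal crossing boundary, the geometric pairing $\langle T_G,[C^-(\beta)]\rangle$ is nonzero exactly for the refinements $G\preceq F$ (those obtained from $F$ by adding brackets inside the vertex regions), and it splits as a product of one-dimensional local intersection numbers in the normal $z_e$-planes. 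Across each internal edge $e$ of $F$ the two chambers sit on opposite real sides of $D_e$, which contributes the off-diagonal local factor $\tfrac{d_e^{1/2}}{d_e-1}=-\tfrac{\ii}{2}\csc(\pi s_e)$; these factors occur in every contributing $G$ and factor out as $\prod_{e\in E(F)_{int}}(-\tfrac{\ii}{2})\csc(\pi s_e)$.

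It remains to evaluate the sum over the new edges. By the decomposition $F\cong\prod_v K_{|v|-1}$ a refinement $G\preceq F$ is the same datum as an independent face $G_v\leq K_{|v|-1}$ at each vertex $v$, and the added internal edges split as $\bigsqcup_v E(G_v)_{int}$, so the residual sum factorizes into $\prod_v\big(\sum_{G_v\leq K_{|v|-1}}\prod_{e\in E(G_v)_{int}}\tfrac{1}{d_e-1}\big)$. Writing $\tfrac{1}{d_e-1}=-\tfrac{\ii}{2}\cot(\pi s_e)-\tfrac12$ turns each inner sum into a sum over all faces of an associahedron of products of cotangents and constants; the essential point — already visible for $n=4$, where $1-\tfrac12-\tfrac12=0$ — is that the constant contributions cancel en masse and the survivors are precisely the admissible trees weighted by Catalan numbers. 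This cancellation is exactly the combinatorial identity of Theorem \ref{prop:WZ}, and it identifies the inner sum at $v$ with $\big(\tfrac{\ii}{2}\big)^{|v|-3}m_v^\alpha(F)$; equivalently, since the two cycles coincide along $F$, this factor is the diagonal self-intersection number on $\overline{\mathcal{M}}_{0,|v|}$ supplied by Theorem \ref{thm:1}. Collecting the powers gives $(\tfrac{\ii}{2})^{|E(F)_{int}|+\sum_v(|v|-3)}=(\tfrac{\ii}{2})^{n-3}$, which is the asserted product. I expect the two genuine obstacles to be, first, the overall sign: one must track the relative orientations of $C(\alpha)$ and $C(\beta)$ on the oriented double cover $\overline{\mathcal{M}}^{or}_{0,n}(\R)$ and show that the accumulated sign equals $(-1)^{w(\alpha|\beta)+1}$, which is where the relative winding number enters; and second, the rigorous decoupling of the one-dimensional local models along the normal-crossing divisors meeting at $F$, which justifies the factorization — once this is in place the combinatorial collapse is handled cleanly by Theorem \ref{prop:WZ}.
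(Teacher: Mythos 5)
Your strategy is essentially the paper's: expand the regularized cycle in the Kita--Yoshida form, observe that only the refinements $F'<F$ contribute, peel off one cosecant factor per internal edge of $F$, and use the product decomposition of Proposition \ref{prop:2.3} to reduce the residual sum to one copy of the diagonal computation (hence of Theorem \ref{prop:WZ}) per internal vertex; the power count $|E(F)_{int}|+\sum_v(|v|-3)=\codim F+\dim F=n-3$ is the same bookkeeping the paper performs. The one genuine gap is the sign, which you explicitly flag as an ``obstacle'' but do not close, and it is not a routine afterthought: your local ansatz $d_e^{1/2}/(d_e-1)=-\tfrac{\ii}{2}\csc(\pi s_e)$ silently fixes a branch and a relative orientation per divisor, and whether that choice agrees with the orientations actually induced from the oriented double cover is exactly the content of the missing step. (Relatedly, the per-vertex inner sum $\sum_{G_v}\prod_e\frac{1}{e(s_e)-1}$ equals $\left(\frac{1}{2\ii}\right)^{|v|-3}m_v$, not $\left(\frac{\ii}{2}\right)^{|v|-3}m_v$; the discrepancy $(-1)^{\dim F}$ is absorbed into the same unresolved sign.)

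The paper closes this gap in three concrete steps that your proposal would need to reproduce. First, a local orientation computation for a blow-up along a codimension-$p$ center: in coordinates $(w_1,w_2,\dots)$ the proper transforms of the two chambers lying on \emph{opposite} sides of the center are both mapped orientation-compatibly exactly when $p$ is odd, so each bracket $a\in F$, which corresponds to a blow-up of codimension $|a|-1$, contributes a sign $(-1)^{|a|-2}$ to the Kita--Yoshida formula. Second, the accumulated sign $\prod_{a\in F}(-1)^{|a|-2}\cdot(-1)^{|F|}=\prod_{a\in F}(-1)^{|a|-1}$ is identified with $(-1)^{w(\mathbb{I}_n|\alpha)+1}$; this is the only place the relative winding number enters, and it is a combinatorial statement about the bracket lengths of $F=K(\mathbb{I}_n)\cap K(\alpha)$ that your write-up never formulates. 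Third, the argument is first carried out for a standard representative $\alpha$ (Remark \ref{rem:std}) and then extended using $[C^-(\alpha')]=(-1)^n[C^-(\alpha)]$ and $(-1)^{w(\mathbb{I}_n|\alpha')}=(-1)^{w(\mathbb{I}_n|\alpha)+n}$ for $\alpha'=\alpha\circ(n(n-1)\cdots1)\circ(23\cdots n1)^l$; your reduction via $\mathfrak{S}_n$-equivariance needs the same care, since the homology classes carry orientations from $\overline{\mathcal{M}}^{or}_{0,n}(\R)$ that are only defined up to this dihedral ambiguity. With these three points supplied, your argument matches the paper's proof.
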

Note that when $\alpha=\beta$, $F$ is equal to $K(\alpha)$ and therefore, $E(F)_{int}=\varnothing$ and $V(F)_{int}$ is a single point. This case is reduced to Theorem \ref{thm:1}. In the following, we simply write $\langle [C^+(\alpha)],[C^-(\beta)]\rangle_h$ for the homology intersection number $\langle reg[C^+(\alpha)],[C^-(\beta)]\rangle_h$.

\section{Proof of Theorem \ref{thm:1}}

\begin{proof}
By an induction on the natural number $n$, we can prove an identity
\begin{equation}\label{lem:1}
\frac{1}{(T_1-1)\cdots(T_n-1)}=\frac{1}{2^n}\sum_{k=0}^n(-1)^{n-k}\sum_{I\subset\{ 1,\dots,n\},|I|=k}\prod_{i\in I}\frac{T_i+1}{T_i-1}.
\end{equation}
We set $e(\alpha):=e^{2\pi\ii\alpha}$ and $t(\alpha):=\frac{e(\alpha)+1}{e(\alpha)-1}$. For a vector $s=(s_1,s_2,\dots)$, we set $t(s):=t(s_1)t(s_2)\cdots$. For any bracket $a:=(i\cdots j)$, we set $s_a:=s_{i\dots j}:=\sum_{i\leq k<l\leq j}s_{kl}$. Since any face $F<K_{n-1}$ is a set of brackets, we set $s_F:=(s_a)_{a\in F}$. Let us recall the construction of $\overline{\mathcal{M}}_{0,n}$ as an iterated blowing-up of $\mathbb{P}^{n-3}$ (\cite[\S 4]{Devadoss}, \cite[\S4]{Kap}, \cite[Chapter 4]{Kap2}). Any bracket $a=i\cdots j$ in $12\cdots n-1$ corresponds to the proper transform of the linear subvariety $\{ t_i=\cdots =t_j\}$ in $\overline{\mathcal{M}}_{0,n}$ around which the local system $\C\Phi^{\pm}$ has the eigenvalue of local monodromy $e(\pm s_a)$. Therefore, as in \cite[Lemma 1]{MY}, we obtain
\begin{equation}\label{eqn:2}
\langle[C^+(\mathbb{I}_n)],[C^-(\mathbb{I}_n)]\rangle_h=(-1)^{n-3}\sum_{F<K_{n-1}}\prod_{a\in F}\frac{1}{e(s_a)-1},
\end{equation}
which can be deduced from \cite[p175, Proposition]{KY}. On the other hand, we obtain a formula
\begin{equation}\label{eqn:1}
\prod_{a\in F}\frac{1}{e(s_a)-1}=\frac{1}{2^{\codim F}}\sum_{F<G}(-1)^{\dim G-\dim F}t(s_G)
\end{equation}
in view of (\ref{lem:1}). Comparing (\ref{eqn:2}) and (\ref{eqn:1}), we can expand the intersection number $\langle[C^+(\mathbb{I}_n)],[C^-(\mathbb{I}_n)]\rangle_h$ into a sum of $t(s_F)$'s. The coefficient of $t(s_F)$ is given by
\begin{equation}
(-1)^{n-3}\sum_{F^\prime<F}\frac{1}{2^{\codim F^\prime}}(-1)^{\dim F-\dim F^\prime}=\frac{(-1)^{\codim F}}{2^{n-3}}\sum_{F^\prime< F}(-2)^{\dim F^\prime}.
\end{equation}
We set $C_F:=\sum_{F^\prime< F}(-2)^{\dim F^\prime}$. In view of Proposition \ref{prop:2.3}, any face $F^\prime$ of $F$ is decomposed into a product of faces $f_v$ of $K_{|v|-1}$ as $F^\prime=\prod_{v\in V(F)_{int}}f_v$. Thus, we obtain
\begin{equation}
C_F=\sum_{\prod_vf_v<\prod_vK_{|v|-1}}(-2)^{\sum_v\dim f_v}=\prod_{v\in V(F)_{int}}\left( \sum_{f_v<K_{|v|-1}}(-2)^{\dim f_v}\right)=\prod_{v\in V(F)_{int}}C_{K_{|v|-1}}.
\end{equation}
Therefore, we are reduced to computing $C_{K_{n-1}}$. Since the number of $k$-codimensional faces of $K_{n-1}$ is $\frac{1}{n-1}\binom{n-3}{k}\binom{n+k-1}{k+1}$ (\cite[LEMMA 3.2.1]{Devadoss}), we have a formula
\begin{equation}
C_{K_{n-1}}=\frac{1}{n-1}\sum_{k=0}^{n-3}(-2)^{n-3-k}\binom{n-3}{k}\binom{n+k-1}{k+1}.
\end{equation}
In view of Theorem \ref{prop:WZ}, we see that $C_F$ is zero unless $F$ is admissible. When $F$ is admissible, we obtain Theorem \ref{thm:1} again from Theorem \ref{prop:WZ}.
\end{proof}

\section{Proof of Theorem \ref{thm:OffDiagonal}}
Before going into the proof, let us first discuss the signature effect of blowing-up on twisted homology intersection numbers. Let $l_1,\dots,l_q$ be non-constant linear polynomials in $r$-variables $x=(x_1,\dots,x_r)$ with real coefficients. We assume that $l_1,\dots,l_p$ do not have a constant term and $\cup_{i=1}^p\{l_i=0\}$ is normal crossing. We consider domains $D_1,D_2\subset \R^r$ specified by the following relations:
\begin{align*}
\bar{D}_1:l_1\geq 0,\dots,l_p\geq 0;&\ l_{p+1}\geq 0,\dots,l_q\geq 0\\
\bar{D}_2:l_1\leq 0,\dots,l_p\leq 0;&\ l_{p+1}\geq 0,\dots,l_q\geq 0\\
\bar{D}_1\cap\bar{D}_2:l_1= 0,\dots,l_p= 0;&\ l_{p+1}\geq 0,\dots,l_q\geq 0
\end{align*}
for some integer $1\leq p\leq r$. Here, $\bar{D}_i$ denotes the closure of $D_i$. It is important to observe that we can equip $D_1$ and $D_2$ with an orientation induced from that of $\R^r$. After blowing-up, the orientation may differ. Let us see how the orientation changes. Since it is a local problem, we may assume that $l_1=x_1,\dots,l_p=x_p$, $\bar{D}_1=\{ x_1\geq 0,\dots,x_p\geq 0\}$ and $\bar{D}_2=\{ x_1\leq 0,\dots,x_p\leq 0\}$. We consider the blowing-up $\pi:X\rightarrow\C^r$ of $\C^r$ along $\{ x_1=\cdots=x_p=0\}$. Let us take $(w_1:=x_1,w_2:=\frac{x_2}{x_1},\dots,w_p:=\frac{x_p}{x_1},x_{p+1},\dots,x_r)$ as a local coordinate of $X$. On $\R^r$, $D_1$ and $D_2$ are oriented in such a way that the $r$-form $\omega:=dx_1\wedge\cdots\wedge dx_r$ is positive. Let us observe that $\pi^*\omega=w_1^{p-1}dw_1\wedge\cdots\wedge dw_p\wedge dx_{p+1}\wedge\cdots\wedge dx_r$. The proper transforms of $D_1$ and $D_2$ are locally given by the equations
\begin{align}
\pi^{-1}(D_1)=\{ w_1>0,\dots,w_p>0\}\\
\pi^{-1}(D_2)=\{ w_1<0,w_2>0,\dots,w_p>0\}.
\end{align}
We can equip $\pi^{-1}(D_i)$ ($i=1,2$) with an orientation $dw_1\wedge\cdots\wedge dw_p\wedge dx_{p+1}\wedge\cdots\wedge dx_r>0$. Then, $\pi:\pi^{-1}(D_1)\tilde{\rightarrow}D_1$ is orientation-preserving for any $p$. However, $\pi:\pi^{-1}(D_1)\tilde{\rightarrow}D_1$ is orientation-preserving if and only if $p$ is odd. Therefore, even-codimensional blowing-up add a signature $(-)$ to the formula \cite[p175, Proposition]{KY} of homology intersection numbers.

Now, we prove Theorem \ref{thm:OffDiagonal}. Observe that it is enough to evaluate $\langle [C^+(\mathbb{I}_n)],[C^-(\alpha)]\rangle_h$. Let us suppose that the intersection $F=K(\mathbb{I}_n)\cap K(\alpha)$ in $\overline{\mathcal{M}}_{0,n}(\R)$ is non-empty and $\alpha$ is a standard representative in the sense of Remark \ref{rem:std}. The formula \cite[p175, Proposition]{KY}\footnote{This is a formula of a homology intersection number on a complement of a hyperplane arrangement in a projective space. However, since the computation of intersection is a local problem, we can apply the formula even after blowing-up the projective space. The signature effect of blowing-up must be taken into account.} in our setting reads
\begin{equation}\label{eqn:19}
\langle [C^+(\mathbb{I}_n)],[C^-(\alpha)]\rangle_h=\prod_{a\in F}(-1)^{|a|-2}\left(\frac{1}{2\ii}\right)^{|F|}\prod_{a\in F}\csc(\pi s_a)\times (-1)^{\dim F}\sum_{F^\prime<F}\prod_{a\in F^\prime}\frac{1}{e(s_a)-1}.
\end{equation}
Note that each bracket $a=(i\cdots j)\in F$ corresponds to a $(|a|-1)$-codimensional blowing-up along $\{ t_i=\cdots= t_j\}$. We have $\prod_{a\in F}(-1)^{|a|-2}\times (-1)^{|F|}=\prod_{a\in F}(-1)^{|a|-1}=(-1)^{w(\mathbb{I}_n|\alpha)+1}$. In view of Proposition \ref{prop:2.3} and the proof of Theorem \ref{thm:1}, we have an equality

\begin{equation}\label{eqn:20}
(-1)^{\dim F}\sum_{F^\prime<F}\prod_{a\in F^\prime}\frac{1}{e(s_a)-1}=\left(\frac{\ii}{2}\right)^{\dim F}\prod_{v\in V(F)_{int}}m_v(F).
\end{equation}
Combining (\ref{eqn:19}) and (\ref{eqn:20}), we obtain the desired formula (\ref{eqn:OffD}). Lastly, if $\alpha$ is not a standard representative, we see that $\alpha^\prime:=\alpha\circ (n (n-1)\cdots 1)\circ (23\cdots n1)^l$ for some integer $l$ is standard. Combining this observation with $(-1)^{w(\mathbb{I}_n|\alpha^\prime)}=(-1)^{w(\mathbb{I}_n|\alpha)+n}$ and $[C^-(\alpha^\prime)]=(-1)^n[C^-(\alpha)]$, we obtain (\ref{eqn:OffD}).

\section{A combinatorial identity}
\begin{thm}\label{prop:WZ}
For any non-negative integer $p$, we have
\begin{equation}\label{eqn:the sum}
\frac{1}{p+2}\sum_{k=0}^p(-2)^{p-k}\binom{p}{k}\binom{p+k+2}{k+1}=
\begin{cases}
(-1)^{\frac{p}{2}}C_{\frac{p}{2}}&(p: even)\\
0&(p:odd)
\end{cases}.
\end{equation}
\end{thm}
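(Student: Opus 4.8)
The plan is to recognize the left-hand side as a terminating Gauss hypergeometric series evaluated at $1/2$ and then to apply the classical Gauss second summation theorem. Denote by $S_p$ the left-hand side of (\ref{eqn:the sum}). First I would set $t_k:=(-2)^{p-k}\binom{p}{k}\binom{p+k+2}{k+1}$ and compute the term ratio
\[
\frac{t_{k+1}}{t_k}=-\frac12\cdot\frac{p-k}{k+1}\cdot\frac{p+k+3}{k+2}=\frac{(k-p)(k+p+3)}{2(k+1)(k+2)},
\]
which is exactly the term ratio of ${}_2F_1(-p,p+3;2;z)$ at $z=\tfrac12$. Since $t_0=(-2)^p(p+2)$, this identifies the sum as
\[
S_p=(-2)^p\,{}_2F_1\!\left(-p,\,p+3;\,2;\,\tfrac12\right),
\]
a terminating (hence convergence-free) series because $-p$ is a non-positive integer.

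The key observation is that the parameters satisfy $2=\tfrac{(-p)+(p+3)+1}{2}$, i.e. the lower parameter is the shifted average $(a+b+1)/2$ of the upper parameters. This is precisely the shape required by Gauss's second summation theorem, which gives
\[
{}_2F_1\!\left(-p,\,p+3;\,2;\,\tfrac12\right)=\frac{\sqrt{\pi}\,\Gamma(2)}{\Gamma\!\left(\tfrac{1-p}{2}\right)\Gamma\!\left(\tfrac{p+4}{2}\right)}.
\]
From here the dichotomy is immediate. For odd $p$ the argument $\tfrac{1-p}{2}$ is a non-positive integer, so $1/\Gamma\!\left(\tfrac{1-p}{2}\right)=0$ (using that $1/\Gamma$ is entire with zeros exactly at $0,-1,-2,\dots$), giving $S_p=0$. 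For even $p=2m$ I would use $\Gamma(m+2)=(m+1)!$ together with the reflection-formula evaluation $\Gamma\!\left(\tfrac12-m\right)=\frac{(-1)^m 4^m m!}{(2m)!}\sqrt{\pi}$ to obtain ${}_2F_1=\frac{(-1)^m}{4^m}\cdot\frac{(2m)!}{m!(m+1)!}=\frac{(-1)^m}{4^m}C_m$, whence $S_p=(-2)^{2m}\cdot\frac{(-1)^m}{4^m}C_m=(-1)^m C_m=(-1)^{p/2}C_{p/2}$, as claimed.

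The only genuinely delicate point is the legitimacy of invoking Gauss's second theorem for a terminating series together with the Gamma bookkeeping at negative half-integers; recognizing the Gauss-II normalization $c=(a+b+1)/2$ is the real content, after which everything is routine. An alternative route, which avoids citing a closed-form summation, is the Wilf--Zeilberger / creative-telescoping method: one runs Zeilberger's algorithm on the summand to produce a recurrence, checks that it forces the two-step relation $S_{p+2}=-\frac{4(p+1)}{p+4}S_p$, and then verifies that $(-1)^{p/2}C_{p/2}$ (extended by $0$ on odd indices) satisfies the same recurrence with initial data $S_0=1$ and $S_1=0$. I expect the hypergeometric route to be the shortest to write down, while the recurrence route is the most mechanical and self-contained.
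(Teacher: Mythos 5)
Your hypergeometric argument is correct, and it is a genuinely different route from either of the paper's two proofs. The identification $S_p=(-2)^p\,{}_2F_1\!\left(-p,p+3;2;\tfrac12\right)$ via the term ratio, the observation that $2=\frac{(-p)+(p+3)+1}{2}$ puts you in the Gauss second-theorem regime, and the Gamma bookkeeping (the zero of $1/\Gamma$ at the non-positive integer $\tfrac{1-p}{2}$ for odd $p$, and $\Gamma(\tfrac12-m)=\frac{(-1)^m4^m m!}{(2m)!}\sqrt{\pi}$ for $p=2m$) are all right; the use of Gauss's second theorem for a terminating series is legitimate since the identity is then polynomial in the free parameter and follows from the classical statement by continuation. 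The paper instead gives two proofs: (i) a Wilf--Zeilberger proof with an explicit certificate $R(p,k)$ whose telescoping yields the recurrence $(p+4)f(p+2)+4(p+1)f(p)=0$ with initial data $f(0)=1$, $f(1)=0$ --- this is exactly the fallback route you sketch at the end, down to the same two-step recurrence; and (ii) a generating-function proof rewriting the sum as $\frac{1}{p+1}[u^p](1-u)^{-2}\bigl(\frac{1-2u}{1-u}\bigr)^{p+1}$ and applying the Lagrange--B\"urmann formula to land on the Catalan generating function $\frac{1-\sqrt{1-4z}}{2z}$. Your route buys the shortest closed-form derivation and makes the even/odd dichotomy completely transparent; the paper's WZ proof is the most mechanical and self-certifying, while its Lagrange--B\"urmann proof explains structurally why Catalan numbers appear. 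All three arguments are complete.
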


We give two different proofs of Theorem \ref{prop:WZ}. One is based on the Wilf-Zeilberger method (\cite{PWZ},\cite{WZ}), the other is based on the method of generating function. We thank Genki Shibukawa for sharing the second proof.

\vspace{.5em}

\noindent
({\it The first proof}) We set $F(p,k):=\frac{1}{p+2}(-2)^{p-k}\binom{p}{k}\binom{p+k+2}{k+1}$. We set $\Delta_p\cdot F(p,k):=F(p+1,k)$, $R(p,k):=\frac{-8(p+1)(2p+5)k(k+1)}{(p+3)(p-k+1)(p-k+2)}$, $G(p,k):=R(p,k)F(p,k)$. By a direct computation, we obtain
\begin{equation}\label{eqn:ct}
[(p+4)\Delta_p^2+4(p+1)]F(p,k)=G(p,k+1)-G(p,k).
\end{equation}
We set $f(p):=\sum_{k=0}^pF(p,k)=\sum_{k=0}^\infty F(p,k)$. Since $F(p,k)=G(p,k)=0$ if $k>p$, taking a summation of (\ref{eqn:ct}) gives rise to a difference equation
\begin{equation}\label{eqn:de}
(p+4)f(p+2)+4(p+1)f(p)=0.
\end{equation}
On the other hand, we set
\begin{equation}
g(p):=
\begin{cases}
(-1)^{\frac{p}{2}}C_{\frac{p}{2}}&(p:even)\\
0&(p:odd)
\end{cases}.
\end{equation}
It is easy to check that $g(p)$ is also a solution of (\ref{eqn:de}). Since $f(0)=g(0)$ and $f(1)=g(1)$, we obtain $f(p)=g(p)$ for any positive integer $p$.
\qed

\vspace{1em}

\noindent
({\it The second proof}) 
We first observe that the left-hand side of (\ref{eqn:the sum}) is same as the sum
\begin{equation}\label{eqn:Meixner}
\frac{1}{p+1}\sum_{k=0}^p(-2)^{p-k}\binom{p+1}{k+1}\binom{p+k+2}{k},
\end{equation}
which in turn equals to $\frac{1}{p+1}[u^p](1-u)^{-2}\left(\frac{1-2u}{1-u}\right)^{p+1}$. Here, the symbol $[u^p]f(u)$ denotes the coefficient of $u^p$ in the Taylor series expansion of $f(u)$. We apply the so-called Lagrange-B\"urmann formula (\cite[p.129]{WW}). In our setting, it is convenient to state it as follows: Let $H$ and $\phi$ be holomorphic functions defined at the origin. We assume that $\phi(0)\neq 0$. Let $g(z)$ be the inverse function of $\frac{u}{\phi(u)}$. Then, one has an equality 
\begin{equation}\label{eqn:LB}
[z^{p+1}]H(g(z))=\frac{1}{p+1}[u^p]H^\prime(u)\phi(u)^{p+1}
\end{equation}
for any non-negative integer $p$. In our case, we set $\phi(u)=\frac{1-2u}{1-u}$, $H(u)=\frac{1}{1-u}$ and $g(z)=\frac{1+2z-\sqrt{1+4z^2}}{2}$. The left-hand side of (\ref{eqn:LB}) is nothing but the sum (\ref{eqn:Meixner}). A simple computation shows that the right-hand side of (\ref{eqn:LB}) is that of (\ref{eqn:the sum}). Note that the generating function $\sum_{k=0}^\infty C_kz^k$ of Catalan numbers is given by $\frac{1-\sqrt{1-4z}}{2z}$. \qed

\begin{rem}
Let $M_{p}(x;\beta,\gamma)$ be the $p$-th Meixner polynomial (\cite[p.346]{AAR}). (\ref{eqn:Meixner}) is nothing but the special value $M_{p+1}(p;2,\frac{1}{2})$.
\end{rem}

\section{An illustrative example}
Let us compute the intersection number $\langle [C^+(\mathbb{I}_6)],[C^-(134256)]\rangle_h$. The intersection $K(\mathbb{I}_6)\cap K(134256)$ is computed in Example \ref{exa:3.3}. We have $w(\mathbb{I}_6|134256)=2$. Therefore, formula (\ref{eqn:OffD}) reads

\begin{align}
&\langle [C^+(\mathbb{I}_6)],[C^-(134256)]\rangle_h \nonumber\\
=&
\begin{minipage}{4truecm}
\begin{center}
\begin{tikzpicture}[scale=.5]
\draw[-=.5,domain=0:360] plot ({2*cos(\x)},{2*sin(\x)});
\coordinate (2) at ({2*cos(0)},{2*sin(0)});
\coordinate (1) at ({2*cos(60)},{2*sin(60)});
\coordinate (6) at ({2*cos(120)},{2*sin(120)});
\coordinate (5) at ({2*cos(180)},{2*sin(180)});
\coordinate (4) at ({2*cos(240)},{2*sin(240)});
\coordinate (3) at ({2*cos(280)},{2*sin(280)});
\node at (60:2.3){$1$};
\node at (0:2.3){$2$};
\node at (120:2.3){$6$};
\node at (180:2.3){$5$};
\node at (240:2.3){$4$};
\node at (280:2.3){$3$};
\draw[name path=s13] (1) -- (3);
\draw[name path=s34] (3) -- (4);
\draw[name path=s42] (4) -- (2);
\draw[name path=s25] (2) -- (5);
\draw[name path=s56] (5) -- (6);
\draw[name path=s61] (6) -- (1);
\path[name intersections={of= s13 and s42, by={A}}];
\path[name intersections={of= s13 and s25, by={B}}];
\path (A)++(0.3,0.6) coordinate (A1);
\path (A)++(-0.4,-0.8) coordinate (A2);
\coordinate (B1) at ($ (A1) !5! (B) $ );
\draw[red] (A1) -- ($ (A1) !1.1! (2) $ );
\draw[red] (A2) -- ($ (A2) !1.2! (3) $ );
\draw[red] (A2) -- ($ (A2) !1.1! (4) $ );
\draw[red] (B1) -- ($ (B1) !1.1! (1) $ );
\draw[red] (B1) -- ($ (B1) !1.1! (5) $ );
\draw[red] (B1) -- ($ (B1) !1.1! (6) $ );
\draw[red] (A1) -- (A2);
\draw[red] (A1) -- (B1);
\node at (A1){$\bullet$};
\node at (A2){$\bullet$};
\node at (B1){$\bullet$};
\end{tikzpicture}
\end{center}
\end{minipage}
\\
=&-\left(\frac{\ii}{2}\right)^3\csc(\pi s_{34})\csc(\pi s_{234})\times\nonumber\\
&
\begin{minipage}{4truecm}
\begin{center}
\begin{tikzpicture}[scale=.4]
\draw[-=.5,domain=0:360] plot ({2*cos(\x)},{2*sin(\x)});
\draw[red] (0,0) -- (45:2.2);
\draw[red] (0,0) -- (135:2.2);
\draw[red] (0,0) -- (-45:2.2);
\draw[red] (0,0) -- (-135:2.2);
\node at (45:2.7){$1$};
\node at (135:2.7){$6$};
\node at (-135:2.7){$5$};
\node at (-45:2.7){$234$};
\node at (0,0){$\bullet$};
\end{tikzpicture}
\end{center}
\end{minipage}
\times 
\begin{minipage}{4truecm}
\begin{center}
\begin{tikzpicture}[scale=.4]
\draw[-=.5,domain=0:360] plot ({2*cos(\x)},{2*sin(\x)});
\draw[red] (0,0) -- (90:2.2);
\draw[red] (0,0) -- (-30:2.2);
\draw[red] (0,0) -- (210:2.2);
\node at (90:2.7){$561$};
\node at (-30:2.7){$2$};
\node at (210:2.7){$34$};
\node at (0,0){$\bullet$};
\end{tikzpicture}
\end{center}
\end{minipage}
\times
\begin{minipage}{4truecm}
\begin{center}
\begin{tikzpicture}[scale=.4]
\draw[-=.5,domain=0:360] plot ({2*cos(\x)},{2*sin(\x)});
\draw[red] (0,0) -- (90:2.2);
\draw[red] (0,0) -- (-30:2.2);
\draw[red] (0,0) -- (210:2.2);
\node at (90:2.7){$5612$};
\node at (-30:2.7){$3$};
\node at (210:2.7){$4$};
\node at (0,0){$\bullet$};
\end{tikzpicture}
\end{center}
\end{minipage}
\\
=&-\left(\frac{\ii}{2}\right)^3\csc(\pi s_{34})\csc(\pi s_{234}) m(1,234,5,6)m(2,34,561)m(3,4,5612)
\end{align}
Both $m(2,34,561)$ and $m(3,4,5612)$ are a summation over one point $K_2$ and they are both equal to $1$. On the other hand, admissible trees in $K_3=K_{1,234,5}$ are precisely vertices and we have
\begin{align}
m(1,234,5,6) &=
\begin{minipage}{4truecm}
\begin{center}
\begin{tikzpicture}[scale=.4]
\draw[-=.5,domain=0:360] plot ({2*cos(\x)},{2*sin(\x)});
\draw[red] (0,0) -- (45:2.2);
\draw[red] (0,0) -- (135:2.2);
\draw[red] (0,0) -- (-45:2.2);
\draw[red] (0,0) -- (-135:2.2);
\node at (45:2.7){$1$};
\node at (135:2.7){$6$};
\node at (-135:2.7){$5$};
\node at (-45:2.7){$234$};
\node at (0,0){$\bullet$};
\end{tikzpicture}
\end{center}
\end{minipage}
\\
&=
\begin{minipage}{4truecm}
\begin{center}
\begin{tikzpicture}[scale=.4]
\draw[red] (0,0.5) -- (45:2.2);
\draw[red] (0,0.5) -- (135:2.2);
\draw[red] (0,-0.5) -- (-45:2.2);
\draw[red] (0,-0.5) -- (-135:2.2);
\draw[red] (0,-0.5) -- (0,0.5);
\node at (45:2.7){$1$};
\node at (135:2.7){$6$};
\node at (-135:2.7){$5$};
\node at (-45:2.7){$234$};
\node at (0,0.5){$\bullet$};
\node at (0,-0.5){$\bullet$};
\end{tikzpicture}
\end{center}
\end{minipage}
+
\begin{minipage}{4truecm}
\begin{center}
\begin{tikzpicture}[scale=.4]
\draw[red] (0.5,0) -- (45:2.2);
\draw[red] (-0.5,0) -- (135:2.2);
\draw[red] (0.5,0) -- (-45:2.2);
\draw[red] (-0.5,0) -- (-135:2.2);
\draw[red] (-0.5,0) -- (0.5,0);
\node at (45:2.7){$1$};
\node at (135:2.7){$6$};
\node at (-135:2.7){$5$};
\node at (-45:2.7){$234$};
\node at (0.5,0){$\bullet$};
\node at (-0.5,0){$\bullet$};
\end{tikzpicture}
\end{center}
\end{minipage}
\\
&=\cot(\pi s_{2345})+\cot(\pi s_{1234}).
\end{align}
The interested readers can find more examples in \cite{M} and \cite{MI}.

\end{document}